\title{Persistence and spreading speeds of parabolic-elliptic Keller-Segel models in shifting environments}
\author{
 Wenxian Shen\thanks{Partially supported by the NSF grant DMS--1645673}\,\, and\,\,  Shuwen Xue\\
Department of Mathematics and Statistics\\
Auburn University, AL 36849 }
\date{}
\begin{document}

\maketitle

\newtheorem{tm}{Theorem}[section]
\newtheorem{prop}{Proposition}[section]
\newtheorem{defin}{Definition}[section] 
\newtheorem{coro}{Corollary}[section]
\newtheorem{lem}{Lemma}[section]
\newtheorem{assumption}{Assumption}[section]
\newtheorem{rk}{Remark}[section]
\newtheorem{nota}[tm]{Notation}
\numberwithin{equation}{section}

\newcommand{\stk}[2]{\stackrel{#1}{#2}}
\newcommand{\dwn}[1]{{\scriptstyle #1}\downarrow}
\newcommand{\upa}[1]{{\scriptstyle #1}\uparrow}
\newcommand{\nea}[1]{{\scriptstyle #1}\nearrow}
\newcommand{\sea}[1]{\searrow {\scriptstyle #1}}
\newcommand{\csti}[3]{(#1+1) (#2)^{1/ (#1+1)} (#1)^{- #1
 / (#1+1)} (#3)^{ #1 / (#1 +1)}}
\newcommand{\RR}[1]{\mathbb{#1}}

\newcommand{\rd}{{\mathbb R^d}}
\newcommand{\ep}{\varepsilon}
\newcommand{\rr}{{\mathbb R}}
\newcommand{\alert}[1]{\fbox{#1}}
\newcommand{\eqd}{\sim}
\def\p{\partial}
\def\R{{\mathbb R}}
\def\N{{\mathbb N}}
\def\Q{{\mathbb Q}}
\def\C{{\mathbb C}}
\def\l{{\langle}}
\def\r{\rangle}
\def\t{\tau}
\def\k{\kappa}
\def\a{\alpha}
\def\la{\lambda}
\def\De{\Delta}
\def\de{\delta}
\def\ga{\gamma}
\def\Ga{\Gamma}
\def\ep{\varepsilon}
\def\eps{\varepsilon}
\def\si{\sigma}
\def\Re {{\rm Re}\,}
\def\Im {{\rm Im}\,}
\def\E{{\mathbb E}}
\def\P{{\mathbb P}}
\def\Z{{\mathbb Z}}
\def\D{{\mathbb D}}
\newcommand{\ceil}[1]{\lceil{#1}\rceil}

\begin{abstract}
The current paper is concerned with the persistence and spreading speeds of the
 following Keller-Segel chemoattraction system in shifting environments,
\begin{equation}\label{abstract-eq1}
\begin{cases}
u_t=u_{xx}-\chi(uv_x)_x +u(r(x-ct)-bu),\quad x\in\R\cr
0=v_{xx}- \nu v+\mu u,\quad x\in\R,
\end{cases}
\end{equation}
where  $\chi$, $b$, $\nu$, and $\mu$ are positive constants, { $c\in\R$ },  $r(x)$ is H\"older continuous, bounded,
$r^*=\sup_{x\in\R}r(x)>0$, $r(\pm \infty):=\lim_{x\to \pm\infty}r(x)$ exist, and $r(x)$ satisfies either $r(-\infty)<0<r(\infty)$, or $r(\pm\infty)<0$.
Assume $b>\chi\mu$ and  $b\ge \big(1+\frac{1}{2}\frac{(\sqrt{r^*}-\sqrt{\nu})_+}{(\sqrt{r^*}+\sqrt{\nu})}\big)\chi\mu$.
In the case that $r(-\infty)<0<r(\infty)$, it is shown that if the moving speed $c>c^*:=2\sqrt{r^*}$,
then the species becomes extinct in the habitat. If the moving speed $ -c^*\leq c<c^*$, then the species will persist and spread along the shifting habitat  at the asymptotic spreading speed $c^*$. If the moving speed $c<-c^*$, then the species will spread in the both directions at the asymptotic spreading speed $c^*$. In the case that $r(\pm\infty)<0$, it is shown that  if $|c|>c^*$, then the species will become extinct in the habitat. If $\lambda_{\infty}$, defined to be the generalized principle eigenvalue of the operator $u\to u_{xx}+cu_{x}+r(x)u$, is negative and the degradation rate $\nu$ of the chemo-attractant is grater than or equal to some number $\nu^*$, then the species will also become extinct in the habitat. If $\lambda_{\infty}>0$, then the species will persist surrounding the good habitat.
\end{abstract}

\medskip
\noindent{\bf Key words.} Parabolic-elliptic chemotaxis system, spreading speeds, persistence, shifting environment.


\section{Introduction}

This work is concerned with the persistence and spreading speeds  of the attraction Keller-Segel chemotaxis models in  shifting environments of the form
\begin{equation}\label{Keller-Segel-eq0}
\begin{cases}
u_t=\Delta u-\nabla \cdot (\chi u \nabla v)+u(r(x-ct)-bu),\quad x\in\R\cr
0 =\Delta v-  \nu v +\mu u,\quad x\in\R,
\end{cases}
\end{equation}
where $ b, \nu,\mu$ and $\chi$ are positive constants, { $c\in\R$} and $u(t,x)$ and $v(t,x)$ represent the densities of a
 mobile species and a chemo-attractant, respectively.  Biologically, the positive constant $\chi$ measures the sensitivity effect on the mobile species by  the chemical substance which is produced overtime by the mobile species;   the reaction term $u(r(x-ct)-bu)$ in the first  equation of \eqref{Keller-Segel-eq0} describes the local dynamics of the mobile species which depends on the density $u$ and on the shifting habitat with a fixed speed $c$; $\nu$  represents the degradation rate of the  chemo-attractant;  and $\mu$ is the rate at which the mobile species produces the chemo-attractant.

 One of the first mathematical models of chemotaxis was proposed by Keller and Segel in their works \cite{KeSe1,KeSe2}.
  Chemotaxis describes the oriented movements of biological cells and organisms in response to chemical gradient which they may produce themselves  over time
   and is crucial for many aspects of behaviour such
as the location of food sources, avoidance of predators and attracting mates, slime mold aggregation,
tumor angiogenesis, and primitive streak formation. Chemotaxis is also crucial in
macroscopic process such as population dynamics and gravitational collapse.
 A lot of literature is concerned with  mathematical analysis of various chemotaxis models, including system \eqref{Keller-Segel-eq0} with $r(\cdot)$ being a constant function.
 The reader is referred to  \cite{KJPainter} for detailed applications of chemotaxis models in a wide range of biological phenomena.
  The reader is also referred to \cite{HiPa, Hor}  for some detailed introduction into the mathematics of Keller-Segel chemotaxis models.

In this paper, we consider \eqref{Keller-Segel-eq0} with $c\not =0$ and $r(\cdot)$ being a sign changing function.
 In particular, we will consider the following two cases:

\medskip

\noindent {\bf Case 1.} {\it Favorable and unfavorable habitats are separated in the sense that  $r(x)$ is H\"older continuous,   the limits $r(\pm\infty):=\lim_{x\to \pm\infty}r(x)$ exist and are finite, and $r(-\infty)<0<r(\infty)$},  $r(-\infty)\leq r(x)\leq r(\infty),\, \forall\, x\in\R$.

\medskip

\noindent {\bf Case 2.} {\it Favorable habitat is surrounded by unfavorable habitat in the sense that  $r(x)$ is H\"older continuous, $\sup_{x\in \R}r(x)>0$,  the limits $r(\pm\infty):=\lim_{x\to \pm\infty}r(x)$ exist and are finite, and
 $r(\pm\infty)<0$, $\min\{r(\infty),r(-\infty)\}\leq r(x),\, \forall\, x\in\R$.

 }

\medskip

   In {\bf Case 1}, $r(x-ct)$ divides the spatial domain into two regions: the region with good-quality habitat suitable for growth \{$x\in\R$: $r(x-ct)>0$\} and the region with poor-quality habitat unsuitable for growth \{$x\in\R$: $r(x-ct)<0$\}. The edge of the habitat suitable for species growth is shifting at a speed $c$.
In {\bf Case 2}, $r(x-ct)$ still divides the spatial domain into two regions: one favorable for growth \{$x\in\R$: $r(x-ct)>0$\} and one unfavorable for growth \{$x\in\R$: $r(x-ct)<0$\}. The favorable habitat is bounded and surrounded by the unfavorable habitat. The favorable habitat is shifting at a speed $c$.
   These kinds of problems come from considering the threats associated with certain environmental change such as global climate change, in particular, the global warming,  and the worsening of the environment resulting from industrialization which lead to the shifting or translating of the habitat ranges \cite{Ph}. {However, some species may benefit from the climate change, that is, their living environment is improved by the climate change \cite{FeHe}. Climate shifting can effect the density of some animals, such as snakes, moths. These animals attract potential mates by chemotaxis.
  The reader is referred to  \cite{WPCM} for a review of ecological responses to recent climate change.

\smallskip

Consider \eqref{Keller-Segel-eq0}. Central problems include whether the species persists in certain regions with fixed bounded size;  whether it spreads into larger and larger  regions, and if so, how fast it spreads.

These central problems have been well investigated  for \eqref{Keller-Segel-eq0} in the case without chemotaxis and with space-time homogeneous growth rate $r>0$, that is,
\begin{equation}\label{fisher-kpp0}
u_t=\Delta u+u(r-bu), \quad x\in\R.
\end{equation}
 Due to the pioneering works of Fisher
\cite{Fisher} and Kolmogorov, Petrowsky, Piskunov \cite{KPP} on traveling wave solutions and take-over properties of \eqref{fisher-kpp0},
\eqref{fisher-kpp0} is also referred to as  the Fisher-KPP equation.
 The following results are well-known about the spreading speed of \eqref{fisher-kpp0}.
For any
nonnegative solution $u(t,x)$ of (\ref{fisher-kpp0}), if at
time $t=0$, $u(0,x)=u_0(x)$ is $\frac{r}{b}$ for $x$ near $-\infty$ and $0$ for $x$ near $ \infty$, then
$$\limsup_{x \ge ct, t\to \infty}u(t,x)=0 \quad \forall \, c>2\sqrt r
$$
and
$$\limsup_{x \le ct, t\to \infty}|u(t,x)-\frac{r}{b}|=0\quad \forall\,  c<2\sqrt r.
$$
In
literature, $c^*_0=2\sqrt r$ is   called the {\it
spreading speed} for \eqref{fisher-kpp0}.  Since the pioneering works by  Fisher \cite{Fisher} and Kolmogorov, Petrowsky,
Piscunov \cite{KPP},  a huge amount of research has been carried out toward the front propagation dynamics of
  reaction diffusion equations of the form,
\begin{equation}
\label{general-fisher-eq}
u_t=\Delta u+u f(t,x,u),\quad x\in\R^N,
\end{equation}
where $f(t,x,u)<0$ for $u\gg 1$,  $\partial_u f(t,x,u)<0$ for $u\ge 0$ (see \cite{ArWe2, BHN,  BeHaNa1, BeHaNa2, Henri1, Fre, FrGa, LiZh, LiZh1, Nad, NoRuXi, NoXi1, She1, She2, Wei1, Wei2, Zla}, etc.).

There are also many works on the persistence and spreading speeds of the Fisher-KPP equation in shifting environments, that is,
\begin{equation}\label{fisher-kpp1}
u_t=\Delta u+u(r(x-ct)-bu), \quad x\in\R,
\end{equation}
as well as various variants of \eqref{fisher-kpp1}.
For example,
in {\bf  Case 1},
Li et al. \cite{LBSF} studied the spatial dynamics of system \eqref{fisher-kpp1} { for the case $b=1$} and they showed that the persistence and spreading dynamics depend on the speed $c$ of the shifting habitat edge and the number $c^*$,  where $c^*=2\sqrt{r(\infty)}$ for \eqref{fisher-kpp1}. More precisely, they proved that if $c>c^*$, then the species will become extinct in the habitat, and if $0<c<c^*$, then the species will persist and spread along the shifting habitat at the asymptotic spreading speed $c^*$. Recently, Hu and Zou \cite{HHXZ} demonstrated that { in the case $b=1$, }for any given speed $c>0$, \eqref{fisher-kpp1} admits a nondecreasing traveling wave solution connecting $0$ and $r(\infty)$ with the speed agreeing to the habitat shifting speed $c$, which accounts for an extinction wave. Regarding the spatial dynamics of nonlocal dispersal equations and the lattice differential equations, we refer the readers to \cite{CHBL} and \cite{LiWaZh}.

In {\bf Case 2},  {Berestycki et al. \cite{BDNZ} proposed to use the following  reaction-diffusion equation with a forced speed $c>0$ to study the influence of climate change on the population dynamics of biological species:
\begin{equation}\label{B-fisher-kpp1}
u_t= Du_{xx}+f(u, x-ct), \quad x\in\R.
\end{equation}
A typical $f$ considered in \cite{BDNZ} is
$$
f(x,u)=\begin{cases}
au(1-\frac{u}{K}),\quad &0\leq x \leq L,\\
-ru,\quad &x<0 \,\ {\rm and }\,\ x>L
\end{cases}
$$
for some positives constants $a, r, K, L$. They first considered this special case and derived an explicit condition for persistence of species by gluing phase portraits. Then they generalized their analysis for more general type of $f$ to derive criteria for the persistence of a species in any region with a moving and spatially varying habitat.
More precisely, they showed that if $\lambda_{\infty}$, defined to be the generalized principle eigenvalue of the operator $u\to u_{xx}+cu_{x}+f_{u}(x,0)u$, is less than or equal to zero, then the species will go extinct in the long run. If $\lambda_{\infty}>0$, then the species will persist by traveling along with the shifting climate. The high dimension versions with more general type of $f$ was studied later in
\cite{BhRl1}, \cite{BhRl2}. Potapov and M.A. Lewis \cite{APMAL} studied a similar model \eqref{B-fisher-kpp1} in the context of competing species, where the authors investigated the effect of a moving climate on the outcome of competitive interaction between two species. Zhou and Kot \cite{ZhKo} extended the work in Berestycki et al. \cite{BDNZ} to an integro-difference system. They showed that, for a given growth function, dispersal kernel, and patch size, there is a critical rate of range shift beyond which a species’ population will be driven extinct. Regarding the persistence criterion and inside dynamics of integro-difference equations under the climate change, we refer the readers to the paper \cite{LeMaSh}.  }

Comparing with the Fisher-KPP equation, persistence and  spreading speeds of chemotaxis models have only been studied recently.
For example, Salako and Shen studied the spatial spreading dynamics of \eqref{Keller-Segel-eq0} with constant growth rate $r>0$
 and obtained several fundamental results. Some lower and upper bounds for the propagation speeds of solutions with compactly supported initial functions were derived, and some lower bound for the speeds of traveling wave solutions was also derived. It is proved that all these bounds converge to the spreading speed $c_0^*=2\sqrt r$ of \eqref{fisher-kpp0} as $\chi\to 0$  (see \cite{SaSh3}, \cite{SaSh2}, \cite{SaSh1}).
 The reader is also referred to  \cite{FhCh} for the lower and upper bounds of propagation speeds of \eqref{Keller-Segel-eq0} with constant growth rate.
Very recently, the authors of the  paper \cite{SaShXu} improved the results in \cite{SaSh2}.
It is proved that in the case of constant growth rate $r>0$, if $b>\chi\mu$ and $\big(1+\frac{1}{2}\frac{(\sqrt{r}-\sqrt{\nu})_+}{(\sqrt{r}+\sqrt{\nu})}\big)\chi\mu { \leq} b$ hold, then $2\sqrt r$ is the spreading speed of the solutions of   \eqref{Keller-Segel-eq0} with nonnegative continuous initial function $u_0$ with nonempty compact support, that is,
 $$
 \lim_{t\to\infty}\sup_{|x|\ge ct}u(t,x;u_0)=0\quad \forall\, c>2\sqrt r
 $$
 and
 $$
 \liminf_{t\to\infty}\inf_{|x|\le ct} u(t,x;u_0)>0\quad \forall \, 0<c<2\sqrt r,
 $$
 where $(u(t,x;u_0),v(t,x;u_0))$ is the unique global classical solution of   \eqref{Keller-Segel-eq0} with $u(0,x;u_0)=u_0(x)$.

 Up to the authors' knowledge, there is no study on the persistence and spreading speeds of \eqref{Keller-Segel-eq0} with
 $c\not =0$ and $r(\cdot)$ being a sign changing function.
  The objective of the current paper is to investigate the spatial spreading dynamics of \eqref{Keller-Segel-eq0} with the presence of the chemotaxis and shifting environments. In particular,  we investigate the effects of the chemotaxis sensitivity $\chi$ and the speed $c$ of the shifting habitat as well as other parameters in  \eqref{Keller-Segel-eq0} on the  persistence and spreading speeds  of the species.

 In the rest of this introduction, we introduce some standing notations and state the main results on the persistence and  spreading speeds  of \eqref{Keller-Segel-eq0}.

\subsection{Notations and statements of  the  main results.}

In order to state our main results, we first introduce some notations and definitions. Let
$$
C^b_{\rm unif}(\R)=\{ u\in C(\R) \, |\, \ u\ \text{is uniformly continuous and bounded on $\R$}\}.
$$
For every $u\in C^b_{\rm unif}(\R)$, we let $\|u\|_{\infty}:=\sup_{x\in\R}|u(x)|$. For each given  $u_0\in C^b_{\rm unif}(\R)$  with $u_0(x)\geq 0$, we denote by $(u(t,x;u_0),v(t,x;u_0))$ the classical solution of \eqref{Keller-Segel-eq0} satisfying $u(0,x;u_0)=u_0(x)$ for every $x\in\R$. Note that, by the comparison principle for parabolic equations, for every  nonnegative initial function $u_0\in C^b_{\rm unif}(\R)$, it always holds that $u(t,x;u_0)\geq 0$ and $v(t,x;u_0)\geq 0$ whenever $(u(t,x;u_0),v(t,x;u_0))$ is defined. In this work we shall only focus on nonnegative classical solutions of \eqref{Keller-Segel-eq0}  since both functions $u(t,x)$ and $v(t,x)$ represent density functions.

The following proposition states the existence and uniqueness of solutions of \eqref{Keller-Segel-eq0} with nonnegative initial functions.

\begin{prop}\label{existence-uniqueness}
For every nonnegative initial function $u_0\in C^b_{\rm unif}(\R)$ and $c\in\R$, {and for any bounded H\"older continuous function $r(x)$}, there is a unique maximal time $T_{max}$, such that $(u(t,x;u_0),v(t,x;u_0))$ is defined for every $x\in\R$ and $0\le t<T_{\max}$. Moreover if $\chi\mu<b$ then $T_{max}=\infty$ and the solution is globally bounded.
\end{prop}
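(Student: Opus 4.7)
The plan is to reduce the system to a single semilinear parabolic equation for $u$ by solving the elliptic equation for $v$ explicitly in terms of $u$, then combining standard parabolic mild-solution theory for local existence with a maximum-principle argument that exploits the structural condition $\chi\mu<b$ for global boundedness.

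\textbf{Step 1: Eliminate $v$.} For each fixed nonnegative $u(t,\cdot)\in C^b_{\rm unif}(\R)$, the elliptic equation $v_{xx}-\nu v+\mu u=0$ has the unique bounded solution $v=\mu\, G*u$, where $G(x)=\frac{1}{2\sqrt{\nu}}e^{-\sqrt{\nu}|x|}$ is the Green's function of $-\partial_{xx}+\nu$. Standard convolution estimates then give $\|v\|_\infty\le\frac{\mu}{\nu}\|u\|_\infty$ together with comparable $L^\infty$ bounds on $v_x$ and $v_{xx}$ in terms of $\|u\|_\infty$, and all map $u\mapsto v$, $u\mapsto v_x$ are continuous on $C^b_{\rm unif}(\R)$.

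\textbf{Step 2: Local existence and uniqueness.} Expanding the chemotactic term as $(uv_x)_x=u_xv_x+uv_{xx}=u_xv_x+u(\nu v-\mu u)$, the first equation becomes
$$u_t=u_{xx}-\chi u_xv_x+u\bigl(r(x-ct)-(b-\chi\mu)u-\chi\nu v\bigr),$$
a semilinear parabolic equation where $v$ depends linearly on $u$ through convolution with $G$. Using the Duhamel formulation with the heat semigroup on $C^b_{\rm unif}(\R)$ and the bounds from Step 1, a standard Banach contraction argument on a small time interval produces a unique classical solution. A maximal existence interval $[0,T_{\max})$ follows in the usual way, together with the blow-up alternative $\limsup_{t\uparrow T_{\max}}\|u(t,\cdot)\|_\infty=\infty$ whenever $T_{\max}<\infty$.

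\textbf{Step 3: A priori bound via $b>\chi\mu$.} To upgrade to global existence, the key is a time-uniform $L^\infty$ estimate on $u$. The reformulated equation has effective reaction $r(x-ct)-(b-\chi\mu)u-\chi\nu v$, in which the quadratic self-absorption coefficient $b-\chi\mu$ is strictly positive and $v\ge 0$. Comparing with the spatially homogeneous logistic ODE $U'=U(r^*-(b-\chi\mu)U)$, whose equilibrium is $U^*=r^*/(b-\chi\mu)$, one expects
$$\|u(t,\cdot)\|_\infty\le\max\Bigl(\|u_0\|_\infty,\,\tfrac{r^*}{b-\chi\mu}\Bigr)\quad\text{for all }t\in[0,T_{\max}).$$
Combined with the blow-up alternative this forces $T_{\max}=\infty$ and gives global boundedness.

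\textbf{Main obstacle.} The delicate point is Step 3: because $u(t,\cdot)$ need not attain its supremum on $\R$, a naive "evaluate at a maximum point" argument is not available, and the advective term $-\chi u_xv_x$ is not in divergence form after the expansion. I would handle this either by a sliding/penalization argument that produces a near-maximum point where $u_x$ is small and $u_{xx}\le\varepsilon$, reducing the inequality to the scalar logistic comparison above, or by constructing an explicit spatially constant ODE supersolution $\bar U(t)$ of $\bar U'=\bar U(r^*-(b-\chi\mu)\bar U)$ and applying the parabolic comparison principle to $u-\bar U$, using that the chemotactic contribution involving $v$ is nonnegative and hence aids the inequality. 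Uniqueness in Step 2 is routine once the contraction estimate is in place, since the map $u\mapsto v$ is linear and globally Lipschitz on bounded sets of $C^b_{\rm unif}(\R)$.
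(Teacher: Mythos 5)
Your proposal is correct and follows essentially the same route as the paper: the paper itself only cites \cite[Theorems 1.1 and 1.5]{SaSh1} for this proposition, but the reduction you describe --- solving the elliptic equation for $v$, rewriting the first equation as $u_t=u_{xx}-\chi v_x u_x+u\big(r(x-ct)-\chi\nu v-(b-\chi\mu)u\big)$, and bounding $u$ by $\max\{\|u_0\|_\infty,\tfrac{r^*}{b-\chi\mu}\}$ via the parabolic comparison principle with a constant (logistic ODE) supersolution --- is exactly what the paper records at the start of Section 2 and what the cited reference carries out. The concern you raise about the supremum not being attained is resolved by your second suggested device (the spatially constant supersolution together with the Phragm\'en--Lindel\"of comparison principle for bounded solutions with bounded coefficients), which is the standard and correct choice here.
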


The above proposition can be proved by similar arguments as those in (\cite[Theorem 1.1 and Theorem 1.5]{SaSh1}).

Throughout this paper, we assume that $r(x)$ is as in {\bf Case 1} or {\bf Case 2}. We put
\begin{equation}
\label{r-c-star}
r_*=\inf_{x\in\R} r(x),\quad
r^*=\sup_{x\in\R} r(x),\quad c^*=2\sqrt {r^*}.
\end{equation}
Note that, in {\bf Case 1}, $r_*=r(-\infty)$ and
$r^*=r(+\infty)$, and in {\bf Case 2},
$r_*=\min\{ r(-\infty), r(\infty)\}$ and
$r^*=\max_{x\in\R} r(x)$.

Let $\lambda_L(r(\cdot))$ be the principal eigenvalue of the eigenvalue problem
\begin{equation}
\label{ev-eq0}
\begin{cases}
\phi_{xx}+c\phi_x+ r(x) \phi=\lambda \phi,\quad -L<x<L\cr
\phi(-L)=\phi(L)=0.
\end{cases}
\end{equation}
Note that $\lambda_L(r(\cdot))$ is increasing as $L$ increases {(See \cite[Proposition 4.2]{BeHaRo}).
We also have $\lambda_L(r_1(\cdot))\le \lambda_L(r_2(\cdot))$ if $r_1(\cdot)\le r_2(\cdot)$. In particular,
\begin{equation}
\label{new-new-eq-1}
\lambda_L(r(\cdot))\le \lambda_L(r^*)<r^*.
\end{equation}
Let $\lambda_\infty(r(\cdot))=\lim_{L\to\infty}\lambda_L(r(\cdot))$.
By \eqref{new-new-eq-1},
\begin{equation}
\label{new-new-eq0}
\lambda_\infty(r(\cdot))\le r^*.
\end{equation}
}

For convenience, we make the following standing assumption.

\medskip

\noindent {\bf (H1)} $b>\chi\mu$ and  $b\ge \big(1+\frac{1}{2}\frac{(\sqrt{r^*}-\sqrt{\nu})_+}{(\sqrt{r^*}+\sqrt{\nu})}\big)\chi\mu $.

\medskip
\smallskip

{Note that $b\ge \big(1+\frac{1}{2}\frac{(\sqrt{r^*}-\sqrt{\nu})_+}{(\sqrt{r^*}+\sqrt{\nu})}\big)\chi\mu $ implies $b\ge \chi\mu$;  that $b> \chi\mu$ and $\nu \geq r^{*}$  imply     $b\ge \big(1+\frac{1}{2}\frac{(\sqrt{r^*}-\sqrt{\nu})_+}{(\sqrt{r^*}+\sqrt{\nu})}\big)\chi\mu $ (and hence {\bf (H1)}); and that $b\geq \frac{3}{2} \chi\mu$ implies $b\ge \big(1+\frac{1}{2}\frac{(\sqrt{r^*}-\sqrt{\nu})_+}{(\sqrt{r^*}+\sqrt{\nu})}\big)\chi\mu $ (and hence {\bf (H1)}).
 Biologically, $\nu \geq r^{*}$ means that the degradation rate of the chemo-attractant is greater than or equal to the supremum of the intrinsic growth rate of the mobile species over the whole space, and the condition  $b\geq \big(1+\frac{1}{2}\frac{(\sqrt{r^*}-\sqrt{\nu})_+}{(\sqrt{r^*}+\sqrt{\nu})}\big)\chi\mu$  indicates that the chemotaxis sensitivity  is small relative to the self-limitation rate of the mobile species.}

\smallskip

We now state the main results on the persistence and spreading speeds of the species.

\begin{tm}\label{compact-support}
Suppose that $r(x)$ is as in {\bf Case 1}, {\bf (H1)} holds,
and    $u_0(x)$ is nonnegative, bounded and has a nonempty compact support.

\begin{itemize}
\item[(1)]
If  $c>c^*$,
then $$\lim_{t\to\infty}u(t,x;u_0)=0$$ uniformly for $x\in\R$.

\item[(2)]
If ${-c^*\leq}c<c^*$, then for any $0<\varepsilon<\frac{c^*-c}{2}$, there hold
$$
\lim_{t\to\infty}\sup_{x\le (c-\varepsilon)t}u(t,x;u_0)=\lim_{t\to\infty}\sup_{x\ge (c^*+\varepsilon)t}u(t,x;u_0)=0,
$$
and
$$
\liminf_{t\to\infty}\inf_{(c+\varepsilon)t\le x\le (c^*-\varepsilon)t}u(t,x;u_0)>0.
$$
 Moreover, if $b>2\chi\mu$, then
$$
\lim_{t\to\infty} \sup_{(c+\varepsilon)t\le x\le (c^*-\varepsilon)t}|u(t,x;u_0)-\frac{r^*}{b}|=0.
$$
{
\item[(3)]
 If $c<-c^*$, then for any $0<\varepsilon<c^*$, there hold
 $$
\lim_{t\to\infty}\sup_{x\le (-c^*-\varepsilon)t}u(t,x;u_0)=\lim_{t\to\infty}\sup_{x\ge (c^*+\varepsilon)t}u(t,x;u_0)=0,
$$
and
$$
\liminf_{t\to\infty}\inf_{(-c^*+\varepsilon)t\le x\le (c^*-\varepsilon)t}u(t,x;u_0)>0.
$$
 Moreover, if $b>2\chi\mu$, then
$$
\lim_{t\to\infty} \sup_{(-c^*+\varepsilon)t\le x\le (c^*-\varepsilon)t}|u(t,x;u_0)-\frac{r^*}{b}|=0.
$$
}
\end{itemize}
\end{tm}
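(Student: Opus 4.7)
The plan is to extract all three parts from a comparison scheme against the spatially homogeneous Keller--Segel system with growth rate $r^{*}$. The point of hypothesis \textbf{(H1)} is precisely that, as shown in \cite{SaShXu}, for the constant-growth-rate system $w_{t}=w_{xx}-\chi(wz_{x})_{x}+w(r^{*}-bw)$, $0=z_{xx}-\nu z+\mu w$, a compactly supported initial datum spreads at the sharp Fisher--KPP speed $c^{*}=2\sqrt{r^{*}}$; and \textbf{(H1)} also underlies the key a priori bound allowing one to absorb the chemotactic cross term into an effective reaction. Since $r(x-ct)\le r^{*}$ pointwise, a parabolic comparison argument (after substitution $w=ue^{-\chi v}$ or equivalent device justified by \textbf{(H1)}) bounds $u(t,x;u_{0})$ above by the spreading solution with growth rate $r^{*}$. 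Consequently
\[
\lim_{t\to\infty}\sup_{x\ge (c^{*}+\varepsilon)t}u(t,x;u_{0})=0\quad\text{for every }\varepsilon>0,
\]
which already delivers the right-hand extinction statement in all three cases.

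For the left-hand extinction (the $x\le(c-\varepsilon)t$ part of (2), the $x\le(-c^{*}-\varepsilon)t$ part of (3), and the whole of (1)), I would exploit that in these regions $x-ct\to-\infty$, so for any $\delta>0$ one has $r(x-ct)\le r(-\infty)+\delta<0$ once $t$ is large. In (1) specifically, pick $\varepsilon\in(0,c-c^{*})$; then the region where the upper bound from Step~1 might still be positive, namely $x\le(c^{*}+\varepsilon)t$, automatically lies in the unfavorable zone since $x-ct\le(c^{*}+\varepsilon-c)t\to-\infty$. A super-solution of the form $M e^{-\delta t}$, or more carefully $M e^{-\delta(t-T)}$ after a large time $T$, combined with the chemotactic control from \textbf{(H1)}, then forces $u\to 0$ uniformly. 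Cases (2) and (3) on the left use exactly the same mechanism in the strip $x\le(c-\varepsilon)t$ (respectively $x\le(-c^{*}-\varepsilon)t$).

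The heart of the theorem is the lower bound
\[
\liminf_{t\to\infty}\inf_{(c+\varepsilon)t\le x\le (c^{*}-\varepsilon)t}u(t,x;u_{0})>0
\]
(and its analogue in (3)). I would fix any $\tilde c\in(c,c^{*})$, move to the shifting frame $\xi=x-ct$, and note that a point travelling at speed $\tilde c$ in the original frame corresponds to $\xi=(\tilde c-c)t\to+\infty$, so it eventually sits deep inside the favorable region where $r(\xi)$ is close to $r^{*}$. One then builds, on a large but fixed interval $[L,L']$ with $L$ chosen so that $r(\xi)>r^{*}-\eta$ there, a compactly supported sub-solution of a truncated constant-coefficient Keller--Segel system with growth $r^{*}-\eta$; the principal eigenvalue bound $\lambda_{L}(r^{*}-\eta)>0$ for $L$ large (together with the $\chi\mu<b$ absorption from \textbf{(H1)}) guarantees a time-independent positive sub-solution. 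Propagating this sub-solution via the spreading-speed machinery of the associated constant-growth-rate system (again \cite{SaShXu}) at any speed $<c^{*}$, and then reinterpreting the estimate in the original frame (so the sub-solution front is at the line $x=ct+\tilde c_{1}t$ for a suitable $\tilde c_{1}<c^{*}-c$), I get a uniform positive lower bound on the full spreading fan $(c+\varepsilon)t\le x\le(c^{*}-\varepsilon)t$. For (3) the same argument applies with $c$ replaced by $-c^{*}$ on the left end, because once $c<-c^{*}$ every speed $\tilde c\in(-c^{*},c^{*})$ satisfies $\tilde c>c$ so $\xi\to+\infty$ and the favorable region is still reached.

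Finally, the sharpened convergence $u\to r^{*}/b$ when $b>2\chi\mu$ is a sandwich: the super-solution $\bar u\equiv r^{*}/b$ can be verified directly using $v_{xx}-\nu v+\mu u=0$ together with $b>2\chi\mu$ to beat the chemotactic term, while the sub-solution comes from iterating the construction of the previous paragraph on the favorable region, where $r(\xi)\to r^{*}$, and passing to the limit $\eta\to 0$. The principal obstacle, and where \textbf{(H1)} is indispensable, is controlling the chemotactic drift $-\chi(uv_{x})_{x}$ during the sub-solution construction: one must ensure that the advection created by $v_{x}$ neither destroys the positivity of the proposed sub-solution nor lets the spreading sub-solution lag behind speed $c^{*}-\varepsilon$. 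This is where I would rely most heavily on the pressure-type estimates (for $v_x$ in terms of $u$) that hold under \textbf{(H1)} and are already established in the earlier sections of the paper.
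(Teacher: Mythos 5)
Your outline matches the paper's broad strategy (exponential barriers travelling at speed $c^*$ for the right‑hand decay, the negativity of $r(-\infty)$ for the left‑hand decay, a principal‑eigenvalue sub‑solution in the favorable zone for persistence, and a squeeze for the convergence to $r^*/b$), but two of your key steps lean on a tool that does not exist here: a comparison principle between solutions of two Keller--Segel systems. You invoke it once to dominate $u$ by the spreading solution of the constant‑growth‑rate system with rate $r^*$, and again to ``propagate the sub‑solution via the spreading‑speed machinery of the associated constant‑growth‑rate system.'' Chemotaxis systems are not order‑preserving (the paper itself stresses the lack of a comparison principle), and the suggested substitution $w=ue^{-\chi v}$ does not restore monotonicity. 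The paper's actual route is entirely at the level of the scalar equation satisfied by $u$: the upper bound is Lemma \ref{lem-001-1}, an explicit super‑solution $Me^{-\kappa(|x|-c_\kappa t)}$ for $u_t=u_{xx}-\chi v_xu_x+u(r-\chi\nu v-(b-\chi\mu)u)$, valid under the quantitative part of {\bf (H1)} via $|v_x|\le\sqrt{\nu}\,v$; and the lower bound never propagates anything --- instead, for \emph{each} speed $\bar c$ in the fan $[c+\varepsilon,c^*-\varepsilon]$ one passes to the frame moving at speed $\bar c$, where a \emph{fixed} interval $[-l,l]$ eventually lies in the favorable zone, and compares $\tilde u$ (which by the pointwise estimate \eqref{E2-1} satisfies the scalar differential inequality \eqref{AA-eq1}) with the unique positive entire solution of the auxiliary Dirichlet problem \eqref{aux-new-eq01} via a first‑touching‑time argument; uniformity of Lemma \ref{lem-001-3} in $\bar c$ then covers the whole expanding fan. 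Your closing remark that one should ``rely on the pressure‑type estimates for $v_x$ in terms of $u$'' points at the right ingredient, but the proposal does not actually carry out the construction that makes it work.

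A second concrete error: $\bar u\equiv r^*/b$ is \emph{not} a super‑solution of the $u$‑equation. Writing the equation as $u_t=u_{xx}-\chi v_xu_x+u\big(r-\chi\nu v-(b-\chi\mu)u\big)$, the constant $r^*/b$ would need $\chi\nu v\ge \chi\mu r^*/b$ everywhere, which fails wherever $u$ (hence $v$) is small; the correct a priori bound is $\max\{\|u_0\|_\infty,\tfrac{r^*}{b-\chi\mu}\}$. The paper proves $u\to r^*/b$ on the fan not by a sandwich but by a compactness argument: along any sequence $(t_n,x_n)$ in the fan the translates converge to an entire solution of the homogeneous system \eqref{Le2.6-eq1} with $\inf u^*>0$, and Lemma \ref{lem-001-4} (the global stability of $r^*/b$ for that system when $b>2\chi\mu$) forces $u^*\equiv r^*/b$. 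You would need to replace your squeeze by this (or an equivalent) argument.
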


\begin{tm}\label{right-support}
Suppose that $r(x)$ is as in {\bf Cases 1},  {\bf (H1)} holds,
and $u_0(x)$ is nonnegative, bounded, and $u_0(x)=0$ for $x\ll  -1$ and $\liminf_{x\to\infty}u_0(x)>0$ .
\begin{itemize}
\item[(1)]
{ If $c\geq-c^*$}, then for any $\varepsilon>0$, there hold
$$\lim_{t\to\infty}\sup_{x\leq(c-\varepsilon)t}u(t,x;u_0)=0,$$
and
$$\liminf_{t\to\infty}\inf_{x\ge  (c+\varepsilon)t}u(t,x;u_0)>0.$$
 Moreover, if $b>2\chi\mu$, then
$$
\lim_{t\to\infty} \sup_{x\ge (c+\varepsilon)t}|u(t,x;u_0)-\frac{r^*}{b}|=0.
$$
{
\item[(2)]
If $c<-c^*$, then for any $\varepsilon>0$, there hold
$$\lim_{t\to\infty}\sup_{x\leq(-c^*-\varepsilon)t}u(t,x;u_0)=0,$$
and
$$\liminf_{t\to\infty}\inf_{x\ge  (-c^*+\varepsilon)t}u(t,x;u_0)>0.$$
 Moreover, if $b>2\chi\mu$, then
$$
\lim_{t\to\infty} \sup_{x\ge (-c^*+\varepsilon)t}|u(t,x;u_0)-\frac{r^*}{b}|=0.
$$
}
\end{itemize}
\end{tm}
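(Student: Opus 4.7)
The plan is to split Theorem \ref{right-support} into an upper bound (extinction to the left) and a lower bound (persistence to the right), and in both to reduce to a scalar Fisher--KPP--type inequality for $u$ via \textbf{(H1)}, Proposition \ref{existence-uniqueness}, and the comparison machinery of \cite{SaShXu}. Theorem \ref{compact-support} is then invoked as a black box to handle the easy part of the lower bound.

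For the upper bound in Part~(1), I construct a traveling exponential supersolution
$$
\bar u(t,x)=M\min\{1,\ e^{\lambda(x-ct-K)}\},
$$
with $\lambda>0$ satisfying $\lambda^{2}+\lambda c+r(-\infty)+\delta\le 0$ for some small $\delta>0$; such a $\lambda$ exists because $r(-\infty)<0$ forces the discriminant to be positive and the larger root to be strictly positive. Enlarge $K$ and $M$ (using $u_0\equiv 0$ to the left of some $-R$ and $M\ge r^*/b$) to guarantee both $\bar u(0,\cdot)\ge u_0$ and that $M$ alone is a supersolution on $x\ge ct+K$. The supersolution inequality on $x\le ct-L_0$ follows from the choice of $\lambda$ and $r(x-ct)\le r(-\infty)+\delta$ there, while on the bounded strip $ct-L_0\le x\le ct+K$ one uses the constant majorant. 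Along $x=(c-\varepsilon)t$ we get $\bar u=Me^{-\lambda(\varepsilon t+K)}\to 0$. In Part~(2) the construction uses traveling speed $-c^*$ and KPP exponent $\lambda=\sqrt{r^*}$: the explicit function $Me^{\sqrt{r^*}(x+c^*t-K)}$ solves $u_t=u_{xx}+r^*u$ and dominates $u$ because $r(\cdot)\le r^*$; the hypothesis $c<-c^*$ enters only to identify the correct extinction strip.

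For the lower bound, I use $\liminf_{x\to\infty}u_0(x)>0$ to choose $\delta>0$ and $A$ with $u_0\ge\delta\mathbf{1}_{[A,\infty)}$. For each $n\in\N$, $\underline u_0:=\delta\mathbf{1}_{[A,A+n]}$ is a compactly supported minorant, and comparison together with Theorem \ref{compact-support}~(2)--(3) gives
$$
\liminf_{t\to\infty}\inf_{(c+\varepsilon)t\le x\le(c^*-\varepsilon)t}u(t,x;u_0)>0
$$
when $-c^*\le c<c^*$ (Part~(1)) and the analogous statement with $-c^*$ in place of $c$ when $c<-c^*$ (Part~(2)). To cover the remaining region $x\ge(c^*-\varepsilon)t$ --- and, for Part~(1) with $c\ge c^*$, the entire region $x\ge(c+\varepsilon)t$ --- I work in the far-right regime where $r(x-ct)\ge r^*-\eta$ for $t$ large. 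There the coupled system is, up to lower-order error controlled via \textbf{(H1)}, an autonomous Fisher--KPP with rate close to $r^*$, and the pointwise lower bound $u_0\ge\delta$ on $[A,\infty)$ propagates forward by parabolic Harnack to give a uniform positive lower bound. Under $b>2\chi\mu$, the matching scalar upper solution from \cite{SaShXu} converges to $r^*/b$ in the same region, and a squeeze delivers the convergence statements.

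The main obstacle is this last extension: Theorem \ref{compact-support} alone only spreads a compactly supported seed up to the intrinsic speed $c^*$, so covering $x\ge(c^*-\varepsilon)t$ requires genuine exploitation of the non-compact positive tail of $u_0$ at $+\infty$, together with uniform-in-$x$ control of the chemotaxis coupling. Executing this cleanly is where the quantitative scalar bounds of \cite{SaShXu}, rather than just their qualitative conclusions, must be invoked.
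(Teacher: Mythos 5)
Your lower-bound argument breaks at its very first step: the Keller--Segel system \eqref{Keller-Segel-eq0} does not obey a comparison principle with respect to initial data, because $v$ (hence the drift $v_x$) depends nonlocally on $u$, so enlarging $u_0$ does not produce a pointwise larger solution. From $u_0\ge \delta\mathbf{1}_{[A,A+n]}$ you therefore cannot conclude $u(t,x;u_0)\ge u(t,x;\delta\mathbf{1}_{[A,A+n]})$ and then quote Theorem \ref{compact-support}; the paper itself flags exactly this obstruction (``due to the lack of comparison principle for chemotaxis models''). The remaining ``far-right regime plus parabolic Harnack'' step is not a proof either: Harnack iterates lose a multiplicative constant at each step and give no time-uniform lower bound, and the region $\{x\ge(c+\varepsilon)t\}$ has a moving left boundary through which smallness can leak, so persistence there needs an actual sub-solution mechanism (instability of $0$ in a suitable moving frame). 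A telltale sign that something essential is missing is that your sketch never uses the hypothesis $c\ge-c^*$ of part (1); as written it would yield $\liminf_{t\to\infty}\inf_{x\ge(c+\varepsilon)t}u>0$ also for $c<-c^*$, contradicting the extinction $\sup_{x\le(-c^*-\varepsilon')t}u\to0$ of part (2). The paper instead passes to the frame moving with speed $c+\tilde\varepsilon$, observes that $\tilde u$ satisfies the scalar differential \emph{inequality} \eqref{AA-eq1} whose drift $\chi\tilde v_x$ is controlled by \eqref{E2-1}, builds a genuine sub-solution of that inequality on the half-line $x\ge-l$ with Dirichlet data at $-l$ (Steps 1--3 of its proof), and bounds it below by translates of the positive entire solution from Lemma \ref{lem-001-3}; the requirement that $\bar c$ can be chosen in $[-2\sqrt{r^*}+\epsilon,\,c+\tilde\varepsilon-\xi]$ is precisely where $c\ge-c^*$ enters.

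For the left extinction your explicit supersolution $M\min\{1,e^{\lambda(x-ct-K)}\}$ is a genuinely different route from the paper, which argues by compactness: a rescaled limit along $x_n-ct_n\to-\infty$ produces an entire solution of the constant-coefficient system with growth rate $r(-\infty)<0$, forcing $u^*\equiv0$. Your route is salvageable, but the supersolution inequality does not ``follow from the choice of $\lambda$'' alone: you must dominate the chemotactic drift $-\chi\bar u_x v_x$ and the term $-\chi\nu v\,\bar u$ together, e.g.\ via $|v_x|\le\sqrt{\nu}\,v$ and by taking $\lambda\le\sqrt{\nu}$ so that $-\chi\lambda v_x-\chi\nu v\le0$ (a small $\lambda>0$ is admissible since your quadratic is negative at $\lambda=0$); this is the same bookkeeping that Lemma \ref{lem-001-1} performs and cannot be skipped. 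The part (2) left bound with exponent $\sqrt{r^*}$ is exactly Lemma \ref{lem-001-1}(ii) and is fine.
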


\begin{tm}
\label{persistence-extinction-thm1}
Suppose that $r(x)$ is as in {\bf Case 2}, {\bf (H1)} holds, and  $u_0(x)$ is nonnegative, bounded and has a nonempty compact support.

\begin{itemize}
\item[(1)]
If $|c|>c^*$,  then
$$
\lim_{t\to\infty} u(t,x;u_0)=0
$$
uniformly in $x\in\R$.

\item[(2)] If $\lambda_\infty(r(\cdot))<0$ and $\nu\ge \nu^*:={\frac{(\sqrt{8r^*+c^2}+|c|)^2}{4}  }$, then
$$
\lim_{t\to\infty} u(t,x;u_0)=0
$$
uniformly in $x\in\R$.

\item[(3)] If $|c|<c^*$, then
$$
\lim_{t\to\infty} \sup_{|x-ct|\ge c^{'}t}u(t,x;u_0)=0\quad \forall\,\, c^{'}>0.
$$
 If, additionally, $\lambda_\infty(r(\cdot))>0$, then
$$
\liminf_{t\to\infty}\inf_{|x-ct|\le L} u(t,x;u_0)>0\quad \forall\,\, L>0.
$$

\end{itemize}
\end{tm}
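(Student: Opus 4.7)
My plan is to work in the moving frame $\xi = x - ct$, writing $U(t,\xi) = u(t,\xi+ct)$ and $V(t,\xi) = v(t,\xi+ct)$, so that
\begin{equation*}
U_t = U_{\xi\xi} + cU_\xi - \chi(UV_\xi)_\xi + U(r(\xi) - bU),\qquad 0 = V_{\xi\xi} - \nu V + \mu U.
\end{equation*}
Under (H1), the machinery developed in the authors' earlier work \cite{SaShXu} (which underlies Theorems 1.1 and 1.2 of this excerpt) allows the chemotaxis term to be absorbed into Fisher--KPP-type upper bounds, reducing most super-solution constructions to essentially linear or logistic comparisons. The three parts will be proved by three different choices of super- and sub-solutions together with three different eigenvalue arguments.

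For part (1), I work in the original frame and exploit $r(x-ct) \leq r^*$: solutions of the corresponding spatially homogeneous Fisher--KPP equation starting from compactly supported data are bounded above by $M\bigl(e^{-\gamma(x - c^* t)} + e^{\gamma(x + c^* t)}\bigr)$ for suitable $\gamma, M > 0$, and after absorbing the chemotaxis term this same bound controls $u$. Evaluating at $x = ct$, the center of the moving favorable region, gives $M e^{-\gamma(|c|-c^*)t} \to 0$, while far from $x = ct$ the condition $r(\xi) < 0$ outside a bounded set provides additional exponential decay. Combining the two produces uniform decay of $u$.

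For part (2), I use $\lambda_\infty(r(\cdot)) < 0$ to choose $L$ large enough that the Dirichlet principal eigenvalue $\lambda_L(r(\cdot))$ is already negative, with positive principal eigenfunction $\phi_L$ on $(-L,L)$. Extending $\phi_L$ to a globally positive bounded function $\psi$ with suitable exponential decay at infinity, I construct a super-solution of the form $\bar U(t,\xi) = A\psi(\xi) e^{\lambda_L(r(\cdot))t/2}$. The hypothesis $\nu \geq \nu^* = (\sqrt{8r^*+c^2}+|c|)^2/4$ is designed precisely to dominate the chemotaxis piece: from the elliptic identity $V = \mu(\nu - \partial_{\xi\xi})^{-1} U$ one extracts pointwise bounds on $V_\xi$ and $V_{\xi\xi}$ in terms of $\|U\|_\infty$, and $\nu^*$ is the threshold at which the effective linear operator (now including the chemotaxis correction) still has a negative generalized principal eigenvalue, so that $\bar U$ really is a super-solution. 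Comparison then forces $U \to 0$ uniformly.

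Part (3) has two pieces, both argued in the moving frame. For the first, I use that $r(\xi) \leq -\delta < 0$ for $|\xi| > L_0$: for any $c' > 0$, an exponential barrier of the form $A e^{-\alpha(|\xi| - c' t)}$ with small $\alpha$ serves as a super-solution outside the tube $|\xi| \leq c' t$ once the chemotaxis term is absorbed via (H1), forcing $U \to 0$ there. For the second, using $\lambda_\infty(r(\cdot)) > 0$ I pick $L$ with $\lambda_L(r(\cdot)) > 0$ and positive eigenfunction $\phi_L$, and build a stationary sub-solution $\underline U = \eta\phi_L(\xi)$ (extended by zero) supported in $(-L,L)$, valid for $\eta$ small; once $U(t_0,\cdot) \geq \underline U$ at some finite $t_0$ on $(-L,L)$, the comparison principle delivers uniform positivity for all later times. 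The seed lower bound at $t_0$ is installed by a small-amplitude sub-solution spreading argument that uses $|c| < c^*$ to guarantee that enough mass sits near $\xi = 0$ by some finite time. The main obstacle throughout is that the chemotaxis coupling $\chi(UV_\xi)_\xi$ is genuinely nonlinear, so exponential trial functions never fit the equation exactly; the conditions (H1), and the threshold $\nu \geq \nu^*$ in part (2), are engineered precisely so that $V_\xi$ and $V_{\xi\xi}$ can be bounded in terms of $U$ and the extra contribution absorbed into the logistic or linear reaction term. Verifying the sharp form of $\nu^*$ and constructing the seed sub-solution for part (3) will require the main quantitative work.
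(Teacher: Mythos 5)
Your overall strategy (moving frame, spreading-speed upper bound for part (1), an eigenfunction super-solution for part (2), and an eigenfunction sub-solution for part (3)) matches the paper's, but two steps as you describe them would fail. The more serious one is in part (2). You propose to take the Dirichlet principal eigenfunction $\phi_L$ on $(-L,L)$, extend it to a globally positive $\psi$, and use $\bar U=A\psi e^{\lambda_L t/2}$. The obstruction is that the super-solution inequality is not obtained by bounding $V_\xi$ in terms of $\|U\|_\infty$; the actual mechanism (and the only reason $\nu^*$ has the value it does) is the pair of pointwise ratio bounds $|V_\xi|\le\sqrt{\nu}\,V$ (Lemma \ref{new-new-lm2}) and $|\psi'|\le\sqrt{\nu^*}\,\psi$, which together give
\[
-\chi V_\xi\,\bar U_\xi-\chi\nu V\bar U\;\le\;\chi\sqrt{\nu}\,V\bigl(|\bar U_\xi|-\sqrt{\nu}\,\bar U\bigr)\;\le\;\chi\sqrt{\nu}\,V\bigl(\sqrt{\nu^*}-\sqrt{\nu}\bigr)\bar U\;\le\;0 ,
\]
so the entire chemotaxis contribution is dominated by the absorption term $-\chi\nu V\bar U$. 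A truncated Dirichlet eigenfunction cannot satisfy $|\psi'|\le\sqrt{\nu^*}\psi$: near $\pm L$ one has $\phi_L\to 0$ with $\phi_L'\ne 0$, so $|\psi'|/\psi$ blows up, and any positive extension inherits this problem in a neighborhood of where $\phi_L$ was small. The paper avoids this by using the \emph{whole-line} generalized eigenfunction $\phi_\infty$, obtained as a Harnack limit of normalized $\phi_{L_n}$, and proving $|\phi_\infty'|\le\tfrac{\sqrt{8r^*+c^2}+|c|}{2}\phi_\infty=\sqrt{\nu^*}\phi_\infty$ via an explicit integral representation (Lemma \ref{new-new-lm3}). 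Without that representation-formula gradient estimate your part (2) does not close.

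The second gap is in the persistence half of part (3). A frozen sub-solution $\eta\phi_L$ must be a sub-solution of an equation containing the solution-dependent drift $-\chi\tilde v_x\tilde u_x$, and ``absorbing the chemotaxis term via (H1)'' does not explain how. The paper's device is to truncate the drift at a small level $\epsilon^*$ (the function $A$), prove existence of a positive bounded \emph{entire} solution $u^*(t,x;L,R,A)$ of the truncated auxiliary problem (Lemma \ref{new-new-lm1}), and then run a first-touching-time comparison in which the key point is the pointwise estimate $|\chi\tilde v_x|\le C_R\sqrt{\tilde u}+\epsilon^*/4$: at a touching point $\tilde u$ equals the small sub-solution, hence the true drift lies below the truncation level and coincides with $A$ there, so the comparison is legitimate. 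This is the main technical content of the persistence proof and is absent from your sketch. (Minor points: your part (1) bound should be $M e^{-\kappa(|x|-c_\kappa t)}$, i.e.\ a one-sided bound in $|x|$, not a sum of two exponentials, which does not tend to zero at $x=ct$; and the seed for the sub-solution in part (3) is immediate from the strong maximum principle, no spreading argument using $|c|<c^*$ is needed.)
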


We conclude the introduction with the following remarks.

\begin{rk}
\begin{itemize}

\item[(1)] Suppose that $r(x)$ is as in {\bf Case 1} and the species initially lives in a region with ${-M\le} x\le M$ for some $M\in\R$.  Theorem \ref{compact-support}(1)  shows that if
    $c>c^*$, then the species will become extinct in the habitat.  Theorem \ref{compact-support}(2) shows that if ${-c^*\leq}c<c^*$, then the species will persist and spread along the shifting habitat into larger and larger region
at the asymptotic spreading speed $c^*$.Theorem \ref{compact-support}(3) shows that if $c<-c^*$, then the species will persist and spread at the asymptotic spreading speed $c^*$. When $\chi=0$, {\bf (H1)} becomes $b>0$. Hence Theorem \ref{compact-support}(1)  recovers
\cite[Theorem 2.1]{LBSF}, and   Theorem \ref{compact-support}(2) recovers
\cite[Theorem 2.2]{LBSF}.

\item[(2)] Suppose that  $r(x)$ is as in {\bf Case 1} and the species initially lives in a region with $x\ge M$ for some $M\in\R$. Theorem \ref{right-support}{(1)} shows that
for any given { $c\ge -c^*$}, the species will persist and spread along the shifting habitat at the asymptotic spreading speed $c$. Theorem \ref{right-support}{(2)}shows that
for any given $c< -c^*$, the species will persist and spread  at the asymptotic spreading speed $c^*$.

\item[(3)] It is sufficient to assume that $b>\chi\mu$ for $\lim_{t\to\infty}\sup_{x\le (c-\varepsilon)t} u(t,x;u_0)=0$ in Theorem \ref{compact-support}(2). It is not necessary to assume that $b\ge \big(1+\frac{1}{2}\frac{(\sqrt{r^*}-\sqrt{\nu})_+}{(\sqrt{r^*}+\sqrt{\nu})}\big)\chi\mu$ in Theorem \ref{compact-support}(2)  for proving $\liminf_{t\to\infty}\inf_{(c+\varepsilon)t\le x\le (c^*-\varepsilon)t}u(t,x;u_0)>0$ and  in  Theorem \ref{compact-support}(3) for proving  $\liminf_{t\to\infty}\inf_{(-c^*+\varepsilon)t\le x\le (c^*-\varepsilon)t}u(t,x;u_0)>0$. {In Theorem \ref{right-support}, the condition
$b\ge \big(1+\frac{1}{2}\frac{(\sqrt{r^*}-\sqrt{\nu})_+}{(\sqrt{r^*}+\sqrt{\nu})}\big)\chi\mu$ is only needed in proving
$\lim_{t\to\infty}\sup_{x\leq(-c^*-\varepsilon)t}u(t,x;u_0)=0$.
}

 \item[(4)]
 {Suppose that $r(x)$ is as in {\bf Case 2} and the species initially lives in a region with ${-M\le} x\le M$ for some $M\in\R$.  Theorem \ref{persistence-extinction-thm1}(1)  shows that if
    $|c|>c^*$, then the species will die out in the long run.   If $\lambda_{\infty}(r(\cdot))<0$ and the degradation rate $\nu$ of the chemo-attractant is greater than or equal to $\nu^*={\frac{(\sqrt{8r^*+c^2}+|c|)^2}{4}  }$, then the species will also die out in the long run. If $\lambda_{\infty}(r(\cdot))>0$, then the species will persist surrounding the good habitat. When $\chi=0$, {\bf (H1)} becomes $b>0$. Hence Theorem \ref{persistence-extinction-thm1}(2) and (3) recovers
\cite[Theorem 4.11]{BDNZ}. {The assumption $\nu\geq \nu^*$ indicates that the degradation rate of the chemo-attractant is large relative to the speed of the shifting environment.}

\item[(5)] {For the case $\lambda_{\infty}(r(\cdot))=0$, it is still open whether the species will persist or die out in the habitat. }

\item[(6)] In Theorem \ref{persistence-extinction-thm1}, the condition
$b\ge \big(1+\frac{1}{2}\frac{(\sqrt{r^*}-\sqrt{\nu})_+}{(\sqrt{r^*}+\sqrt{\nu})}\big)\chi\mu$ is only needed in (1).}

   \item[(7)] It is not easy to prove the persistence and spreading speeds of solutions of \eqref{Keller-Segel-eq0}. Several new techniques are developed to prove the results stated in the above theorems. These techniques can also be applied to the case $\chi=0$.

{
\item[(8)] Consider the following  attraction Keller-Segel chemotaxis models in higher dimensional shifting environments,
\begin{equation}\label{Keller-Segel-eq-1}
\begin{cases}
u_t=\Delta u-\nabla \cdot (\chi u \nabla v)+u(r(x\cdot \xi-ct)-bu),\quad x\in\R^n\cr
0 =\Delta v-  \nu v +\mu u,\quad x\in\R^n,
\end{cases}
\end{equation}
where $\xi\in\R^n$ is a unit vector.
   We believe that the results stated in Theorems 1.1-1.3 can be extended to \eqref{Keller-Segel-eq-1}.
    For example, for given $u_0\in C_{\rm unif}^b(\R^n)$ with $u_0\ge 0$ and $\{x\,: \, |x\cdot\xi|<r\}\subset {\rm supp}(u_0)\subset \{x\,:\, |x\cdot\xi|<R\}$ for some $0<r<R$, we believe that the statements in Theorem hold with $x\le \tilde c t$ (resp. $x\ge \tilde c t$) being replaced by $x\cdot\xi\le \tilde c t$ (resp. $x\cdot\xi\ge \tilde c t$), where $\tilde c=c-\epsilon$,  $c^*-\epsilon$, or $-c^*-\epsilon$
    (resp. $c+\epsilon$, $c^*+\epsilon$, or $-c^*+\epsilon$).    But, due to the lack of comparison principle for chemotaxis models,
    more new techniques may need to be developed to prove such results. We plan to study the extension of the results in the current paper to
    higher space dimension case somewhere else.
}
\end{itemize}
\end{rk}

The rest of the paper is organized as follows. In section 2, we present some preliminary lemmas to be used in the proofs of the main results.
 In section 3, we study the vanishing and spreading of solutions of  \eqref{Keller-Segel-eq0} with $r(x)$ being as in {\bf Case 1} and
  prove Theorems \ref{compact-support} and \ref{right-support}. In section 4, we study the vanishing and spreading of solutions of  \eqref{Keller-Segel-eq0} with $r(x)$ being as in {\bf Case 2} and prove Theorem \ref{persistence-extinction-thm1}.

\section{Preliminary lemmas}
In this section, we present some preliminary lemmas to be used in the proofs of the main theorems in later sections.

Note that, by the second equation in \eqref{Keller-Segel-eq0},
$$
\Delta v=\nu v-\mu u.
$$
Hence the first equation in \eqref{Keller-Segel-eq0} can be written as
$$
u_t=\Delta u-\chi \nabla u\cdot \nabla v+ u\big( r(x-ct)-\chi \nu v-(b-\chi\mu)u\big),\quad x\in\R.
$$
By the comparison principle for parabolic equations, if $b>\chi\mu$, then for any $u_0\in C_{\rm unif}^b (\R)$ with $u_0\ge 0$,
$$
0\le u(t,x;u_0)\le { \max}\{\|u_0\|_\infty, \frac{r^*}{b-\chi\mu}\}\quad \forall\,\ t\ge 0,\,\, x\in\R.
$$

\begin{lem}
\label{new-lm2}
Assume $b>\chi\mu$.
For every $R\gg 1$, there are $C_R\gg 1$ and $\varepsilon_R>0$ such that for any $u_0\in C_{\rm unif}^b(\R)$ with $u_0\ge 0$, any
$x_0\in \R$, and any $t\ge 0$,  we have
 \begin{equation}\label{nnew-eq3}
\|\chi v_x(t,\cdot;u_0)\|_{L^{\infty}(B_{\frac{R}{2}}(x_0))} + \|\chi\nu v(t,\cdot;u_0)\|_{L^{\infty}(B_{\frac{R}{2}}(x_0))}\leq C_{R}\|u(t,\cdot;u_0)\|_{L^{\infty}(B_{R}(x_0))}+\varepsilon_R M
\end{equation}
with $\lim_{R\to\infty}\varepsilon_R=0$, where $M:=\max\{\|u_0\|_{\infty},\frac{r^*}{b-\chi\mu}\}$.
\end{lem}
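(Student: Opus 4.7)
My plan is to use the explicit Green's function representation for the elliptic equation $v_{xx}-\nu v=-\mu u$ on $\R$. Since $b>\chi\mu$, Proposition \ref{existence-uniqueness} gives the global uniform bound $0\le u(t,\cdot;u_0)\le M$, and the only bounded solution of the elliptic equation (both $e^{\pm\sqrt{\nu}\,x}$ being unbounded on $\R$) is
\[
v(t,x;u_0)=\frac{\mu}{2\sqrt{\nu}}\int_{\R} e^{-\sqrt{\nu}\,|x-y|}\,u(t,y;u_0)\,dy,
\]
so differentiation yields
\[
v_x(t,x;u_0)=-\frac{\mu}{2}\int_{\R}\mathrm{sgn}(x-y)\,e^{-\sqrt{\nu}\,|x-y|}\,u(t,y;u_0)\,dy.
\]
This converts the desired local estimates on $v$ and $v_x$ into a straightforward analysis of these Green's-function convolutions.

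Fix $x_0\in\R$ and $R\gg 1$. For any $x\in B_{R/2}(x_0)$ I would split each of the two integrals as $\int_{B_R(x_0)}+\int_{\R\setminus B_R(x_0)}$. On the near part, bound $|u(t,y;u_0)|\le \|u(t,\cdot;u_0)\|_{L^\infty(B_R(x_0))}$, extend the integral to all of $\R$, and use $\int_{\R}e^{-\sqrt{\nu}\,|z|}\,dz=2/\sqrt{\nu}$; this contributes a multiple of $\|u(t,\cdot;u_0)\|_{L^\infty(B_R(x_0))}$ with a constant (at most of order $\chi\mu(1+1/\sqrt{\nu})$) that need not depend on $R$. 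On the far part, the geometry $x\in B_{R/2}(x_0)$, $y\notin B_R(x_0)$ forces $|x-y|\ge R/2$; combined with $|u(t,y;u_0)|\le M$ and $\int_{|z|\ge R/2}e^{-\sqrt{\nu}\,|z|}\,dz=(2/\sqrt{\nu})\,e^{-\sqrt{\nu}R/2}$, this produces a bound of order $e^{-\sqrt{\nu}R/2}\,M$.

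Collecting the four resulting estimates (two for $\|\chi\nu v(t,\cdot;u_0)\|_{L^{\infty}(B_{R/2}(x_0))}$ and two for $\|\chi v_x(t,\cdot;u_0)\|_{L^{\infty}(B_{R/2}(x_0))}$) gives \eqref{nnew-eq3} with a choice like $C_R:=(1+1/\sqrt{\nu})\chi\mu$ (enlarged if one wants $C_R\gg 1$) and $\varepsilon_R:=(1+1/\sqrt{\nu})\chi\mu\,e^{-\sqrt{\nu}R/2}$, which clearly satisfies $\varepsilon_R\to 0$ as $R\to\infty$; all bounds are uniform in $t\ge 0$, $x_0\in\R$, and $u_0$. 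There is no serious obstacle: the argument is essentially a single explicit computation. The one conceptual point needing a line of justification is that $v(t,\cdot;u_0)$ really is given by the convolution above, i.e., that the bounded solution of the elliptic equation is unique---this uses that $w_{xx}-\nu w=0$ admits no nontrivial bounded solution on $\R$.
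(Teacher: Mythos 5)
Your proposal is correct, and it is essentially the argument the paper relies on: the paper's ``proof'' is only a citation to [SaShXu, Lemma 2.5], which carries out the same near/far splitting of a Green's-function representation of $v$, and your kernel $\frac{\mu}{2\sqrt{\nu}}e^{-\sqrt{\nu}|x-y|}$ is precisely the closed form in one dimension of the heat-kernel representation $\Psi$ that the paper itself writes down in \eqref{psi-definition}. Your constants $C_R=(1+1/\sqrt{\nu})\chi\mu$ and $\varepsilon_R=(1+1/\sqrt{\nu})\chi\mu\,e^{-\sqrt{\nu}R/2}$ check out, the uniform bound $0\le u(t,\cdot;u_0)\le M$ needed on the far part is exactly the comparison-principle estimate stated at the start of Section 2 (which is where $b>\chi\mu$ enters), and your remark on uniqueness of the bounded solution of the elliptic equation closes the only conceptual gap.
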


\begin{proof} It follows from the arguments of  \cite[Lemma 2.5]{SaShXu}.
\end{proof}



\begin{lem}
\label{new-lm3}
{ Assume $b>\chi\mu$.} For every $p>1$, $t_0>0$, $s_0\geq 0$, $R>0$, and  $u_0\in C_{\rm unif}^b(\R)$ with $u_0\ge 0$, there is $C_{t_0,s_0,R,M,p}$ such that if $s\in[0,s_0], \ t\geq t_0,  \ |x-y|\leq R$, then
\begin{equation}\label{nnew-eq4}
u(t,x;u_0)\le C_{t_0,s_0,R,M,p}[u(t+s,y;u_0)]^{\frac{1}{p}}(M+1),
\end{equation}
where   $M:=\max\{\|u_0\|_{\infty},\frac{r^*}{b-\chi\mu}\}$.
\end{lem}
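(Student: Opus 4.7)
The plan is to rewrite the equation for $u$ from \eqref{Keller-Segel-eq0} as a linear parabolic PDE with locally bounded coefficients, apply the classical parabolic Harnack inequality, and then extract the $\tfrac{1}{p}$-power factor by interpolating against the global $L^\infty$-bound on $u$.

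First I would eliminate $v_{xx}$ using the second equation of \eqref{Keller-Segel-eq0} to obtain
$$
u_t = u_{xx} - \chi v_x\, u_x + \bigl(r(x-ct) - \chi\nu v - (b-\chi\mu)u\bigr)u.
$$
Since $b>\chi\mu$, one already has the global pointwise bound $u(t,\cdot;u_0)\le M$ noted just above Lemma \ref{new-lm2}. Lemma \ref{new-lm2} then gives an $L^\infty$-bound on $\chi v_x$ and $\chi\nu v$ over balls of radius $R'/2$ in terms of $\|u\|_{L^\infty(B_{R'})}\le M$. Consequently, on any spacetime cylinder $Q=(\tau_1,\tau_2)\times B_\rho(x_0)$ with $\tau_1\ge t_0/2$, the first-order coefficient $-\chi v_x$ and the zero-order coefficient $r(x-ct)-\chi\nu v-(b-\chi\mu)u$ are bounded by a constant depending only on $M$, $\|r\|_\infty$, $\chi$, $\nu$, $\mu$, $b$, $\rho$. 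The diffusion coefficient is $1$, so the equation is uniformly parabolic.

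Next I would invoke the classical parabolic Harnack inequality for nonnegative solutions of uniformly parabolic linear equations with bounded measurable coefficients (Krylov--Safonov or Moser). Applied on a fixed cylinder large enough to accommodate both $(t,x)$ and $(t+s,y)$ whenever $t\ge t_0$, $s\in[0,s_0]$, $|x-y|\le R$ — for example, after a time translation, work in $Q=(0,s_0+1)\times B_{R+1}(x)$ with the ``past'' parabolic box around $(t_0/4,x)$ and the ``future'' parabolic box around $(s_0+1/2,y)$ — this produces a constant $C_0=C_0(t_0,s_0,R,M)$, independent of $u_0$ and of the particular admissible $(t,x,s,y)$, with
$$
u(t,x;u_0)\le C_0\, u(t+s,y;u_0).
$$
The boundary case $s=0$ is handled by applying Harnack between $(t-\delta,x)$ and $(t,y)$ for a fixed small $\delta>0$ with $t-\delta\ge t_0/2$, then absorbing $\delta$ into the constant.

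Finally I would use the interpolation
$$
u(t,x;u_0) = u(t,x;u_0)^{1/p}\cdot u(t,x;u_0)^{(p-1)/p} \le M^{(p-1)/p}\, u(t,x;u_0)^{1/p},
$$
combined with the $1/p$-th power of the Harnack bound
$$
u(t,x;u_0)^{1/p} \le C_0^{1/p}\,[u(t+s,y;u_0)]^{1/p},
$$
to conclude
$$
u(t,x;u_0)\le C_0^{1/p}\,M^{(p-1)/p}\,[u(t+s,y;u_0)]^{1/p}\le C_0^{1/p}(M+1)\,[u(t+s,y;u_0)]^{1/p},
$$
since $M^{(p-1)/p}\le M+1$ for $M\ge 0$. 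Setting $C_{t_0,s_0,R,M,p}=C_0^{1/p}$ yields \eqref{nnew-eq4}. The main obstacle is ensuring that the Harnack constant $C_0$ is uniform across admissible $(t,x,s,y)$ and over $u_0$; this uniformity is automatic because the parabolic Harnack constant depends only on the parabolicity constant (here $1$) and on $L^\infty$-bounds for the drift and potential, both of which are controlled solely by $t_0,s_0,R,M$ through Lemma \ref{new-lm2} and the global bound $u\le M$.
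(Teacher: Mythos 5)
Your Step 1 (recasting the first equation as a uniformly parabolic linear equation with bounded drift and potential) is fine and is close in spirit to how the paper begins. The fatal step is Step 2: the intermediate inequality $u(t,x;u_0)\le C_0\,u(t+s,y;u_0)$ with $C_0$ depending only on $t_0,s_0,R,M$ is \emph{false}, and no application of the Krylov--Safonov/Moser Harnack inequality can produce it. Parabolic Harnack bounds $\sup_{Q^-}u$ by $C\inf_{Q^+}u$ only when $Q^+$ lies a fixed positive time \emph{after} $Q^-$; here $s$ may be $0$, and your proposed fix---Harnack between $(t-\delta,x)$ and $(t,y)$---controls $u(t-\delta,x)$, not $u(t,x)$, while passing from $u(t-\delta,x)$ up to $u(t,x)$ is again a forbidden ``same point, later time'' upper bound. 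More to the point, the inequality itself fails: take $u_0$ a bump of fixed height supported near $z_0=-N$ with $N\gg 1$, and take $x=0$, $y=R$, $t=t_0$, $s=0$. Then $M$ stays bounded, the solution behaves (up to multiplicative factors controlled by the bounded drift and potential over the time interval $[0,t_0]$) like the heat kernel, so $u(t_0,0)\sim e^{-N^{2}/(4t_0)}$, $u(t_0,R)\sim e^{-(N+R)^{2}/(4t_0)}$, and the ratio $u(t_0,0)/u(t_0,R)\sim e^{(2NR+R^{2})/(4t_0)}\to\infty$ as $N\to\infty$. The estimate survives only because of the exponent: $u(t_0,0)/[u(t_0,R)]^{1/p}\sim e^{-\frac{N^{2}}{4t_0}(1-\frac1p)+O(N)}$ stays bounded. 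Hence the power $\tfrac1p$ cannot be appended at the end by interpolating against $\|u\|_\infty$; it must be generated at the step where the two space--time points are compared.

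This is exactly what the paper's proof (via \cite[Lemma 2.2]{FhCh}) does. One first sandwiches $u$ between $e^{(r_*-A)\tau}\bar u$ and $e^{(r^*+A)\tau}\bar u$, where $\bar u$ solves the pure advection--diffusion equation $\bar u_{s'}+\chi v_x\bar u_x=\bar u_{xx}$ started at time $t-\delta$ from $u(t-\delta,\cdot;u_0)$; this removes the zero-order term at the cost of bounded multiplicative constants. Then, using the integral representation $\bar u(t,x)=\int\Gamma(t,x;t-\delta,z)\,\bar u(t-\delta,z)\,dz$ and two-sided Gaussian bounds for the fundamental solution $\Gamma$ of the advection--diffusion operator, one writes $\bar u(t-\delta,z)\le M^{1-1/p}\bar u(t-\delta,z)^{1/p}$ \emph{inside the integral} and applies H\"older's inequality with exponents $p$ and $p'=p/(p-1)$, pairing $[\Gamma(t+s,y;t-\delta,z)\bar u(t-\delta,z)]^{1/p}$ against $\Gamma(t,x;t-\delta,z)\Gamma(t+s,y;t-\delta,z)^{-1/p}$. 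The first factor reproduces $\bar u(t+s,y)^{1/p}$, and the second factor is integrable in $z$, uniformly over $|x-y|\le R$ and $s\in[0,s_0]$, precisely because $p>1$ keeps the combined Gaussian exponent negative. That H\"older-against-the-kernel step is the missing idea in your argument.
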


\begin{proof}
The lemma can be proved by slightly  modified  arguments  of \cite[Lemma 2.2]{FhCh}.

In fact, first, fix $t_0>0, s_0\geq 0, R>0, p\in (1,\infty)$ and $t\geq t_0>0$. Let
$\delta=\min\{\frac{t_0}{2}, 1\}$ and
$$
A=bM+\sup_{t>0}\big(\|\left|\chi\nabla v(t,\cdot;u_0)\right|\|_{\infty}+\|\nabla\cdot\chi\nabla v(t,\cdot;u_0)\|_{\infty}\big).
$$
Note that $\|u(t, \cdot; u_0)\|_{\infty}\leq M$ for all $t\ge 0$.
 Let
$ \bar u(s',x)$ be the solution of
\begin{equation}
\begin{cases}
\bar u_{s'}+\chi\nabla v(s', \cdot; u_0)\cdot \nabla \bar u=\Delta \bar u \quad {\rm in}\, (t-\delta, \infty)\times\R \cr
\bar u(t-\delta, \cdot)=u(t-\delta, \cdot; u_0)   \quad  {\rm in}\,\  \R.
\end{cases}
\end{equation}
 By the comparison principle for parabolic equations, we have
  $$0\leq\bar u(s',x)\leq \|u(t-\delta,\cdot;u_0)\|_{\infty}\quad {\rm in}\quad (t-\delta, \infty)\times \R.
  $$

  Next,  let
$$
u^-(s',x)=\min\{1,\|u(t-\delta,\cdot;u_0)\|^{-1}_{\infty}\}e^{(r_*-A)(s'-t+\delta)}\bar u(s',x)
$$
 and
$$
u^+(s',x)=e^{(r^*+A)(s'-t+\delta)}\bar u(s',x).
$$
Notice that
\begin{align}
A&>bM+\nabla\cdot\chi\nabla v(s',\cdot;u_0)\cr
&>b\|u(t-\delta,\cdot;u_0)\|_{\infty}+\nabla\cdot\chi\nabla v(s',\cdot;u_0)\cr
&\geq b\bar u(s',x)+\nabla\cdot\chi\nabla v(s',\cdot;u_0)\cr
&\geq bu^-(s',x)+\nabla\cdot\chi\nabla v(s',\cdot;u_0)\,\,\,  \forall\, s'>t-\delta,\,\,  x\in\R.
\end{align}
By a straightforward computation, we have
\begin{align}
&u^-_{s'}+\chi\nabla v(s',\cdot; u_0)\cdot \nabla u^-+u^- \nabla\cdot\chi\nabla v(s',\cdot;u_0)\cr
&=(r_*-A)u^-+\Delta u^-
+u^- \nabla\cdot\chi\nabla v(s',\cdot;u_0)\cr
&\leq r(x-cs')u^-+\Delta u^-+u^- (\nabla\cdot\chi\nabla v(s',\cdot;u_0)-A)\cr
&\leq \Delta u^-+u^-(r(x-cs')-bu^-)\,\,\, \forall\, s'>t-\delta, x\in\R
\end{align}
and
\begin{align}
&u^+_{s'}+\chi\nabla v(s',\cdot; u_0)\cdot \nabla u^++u^+ \nabla\cdot\chi\nabla v(s',\cdot;u_0)\cr
&=( r^*+A)u^++\Delta u^+
+u^+ \nabla\cdot\chi\nabla v(s',\cdot;u_0)\cr
&\geq \Delta u^++u^+(r(x-cs')-bu^+)\,\,\, \forall\, s'>t-\delta, x\in\R.
\end{align}
Hence, $u^-(s',x)$ and $u^+(s',x)$ are, respectively, a sub- and super-solution of the following equation for $(s',x)\in(t-\delta,\infty)\times \R$,
$$
u_{s'}=\Delta u-\chi \nabla v(s',x;u_0)\cdot\nabla u-\chi u \nabla \cdot\nabla v(s',x;u_0)+u(r(x-cs')-bu),\quad x\in\R.
$$
Furthermore,
$$u^-(t-\delta,\cdot)\leq u(t-\delta,\cdot, u_0)=u^+(t-\delta,\cdot).
 $$
 Then by the comparison principle for parabolic equations, we have that, for all $(s', x)\in(t-\delta, \infty)\times \R$,
\begin{align}\label{ineq-1}
\min\{1,\|u(t-\delta,\cdot;u_0)\|^{-1}_{\infty}\}e^{(r_*-A)(s'-t+\delta)}\bar u(s',x)&\leq u(s',x; u_0)\nonumber\\
&\leq e^{(r^*+A)(s'-t+\delta)}\bar u(s',x).
\end{align}

Now by \eqref{ineq-1},
$$
u(t,x; u_0)\le e^{(r^*+A)\delta} \bar u(t,x)\quad \forall\, x\in\R.
$$
By the arguments of \cite[Lemma 2.2]{FhCh},  there is $C_{t_0,s_0,R,M,p}$ such that if $s\in[0,s_0], \ t\geq t_0,  \ |x-y|\leq R$,
then \eqref{nnew-eq4} holds. The lemma thus follows.
\end{proof}

By \eqref{nnew-eq3} and \eqref{nnew-eq4} with $p>1$, $s_0=0$ and $t_0=1$, we have
\begin{align}
\label{E2-1}
\left|\chi v_x(t,x;u_0)\right|+\chi \nu v(t,x;u_0)&\le C_{R,p} \big(u(t,x;u_0)\big)^{\frac{1}{p}}+\varepsilon_R M\nonumber\\
&= C_{R,p} \big(u(t,x;u_0)\big)^{\frac{1}{p}}+\varepsilon_R M \quad \forall\,\, t\ge 1,\,\, x\in\R,
\end{align}
where  $C_{R,p}=C_R\cdot  C_{1,0,R,M,p}\cdot (M+1) (>0)$.


\begin{lem}
\label{lem-001-1}
 Assume that $b>\chi\mu$. Let $c_{\kappa}=\frac{\kappa^2+r^*}{\kappa}$ with $ 0<\kappa\leq \sqrt{r^*}$ satisfying
$$\frac{(\kappa-\sqrt{\nu})_{+}}{(\kappa +\sqrt{\nu})}\leq \frac{2(b-\chi\mu)}{\chi\mu}.$$ The following hold.
\begin{enumerate}
\item[(i)]
 For any $u_0\ge 0$ with  nonempty compact support
 and  any $M\gg \frac{r^*}{b-\chi\mu}$  satisfying
\begin{equation}\label{u_0-cond1}
\max\{u_0(x),u_0(-x)\}\leq U^+(x):=\min\{M,Me^{-\kappa x}\},\quad \forall\ x\in\R,
\end{equation}
there holds
\begin{equation}
\label{new-eq1}
u(t,x;u_0)\leq M e^{-\kappa(|x|-c_{\kappa}t)}, \quad\forall\ x\in\R,\ t\geq 0.
\end{equation}

\item[(ii)]
 For any $u_0\in C^b_{\rm unif}(\R)$, $u_0(x)\geq 0 $,   and  any  $M\gg \frac{r^*}{b-\chi\mu}$  satisfying
\begin{equation}\label{u_0-cond3}
u_0(x)\leq U^+(x):=\min\{M,Me^{\kappa x}\},\quad \forall\ x\in\R,
\end{equation}
there holds
\begin{equation}
\label{new-eq1-3}
u(t,x;u_0)\leq M e^{\kappa(x+c_{\kappa}t)}, \quad\forall\ x\in\R,\ t\geq 0.
\end{equation}
\end{enumerate}
\end{lem}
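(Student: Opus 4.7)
The plan is a barrier argument. I focus on part (i); part (ii) follows by the same argument with $-\kappa$ replaced by $+\kappa$ in the exponent, so I omit its details. For (i) it suffices to establish the two one-sided bounds $u(t,x;u_0)\le Me^{\kappa(c_\kappa t-x)}$ and $u(t,x;u_0)\le Me^{\kappa(c_\kappa t+x)}$, whose minimum equals $Me^{-\kappa(|x|-c_\kappa t)}$. By the analogous argument with $x\mapsto -x$, it suffices to prove only the first.

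\textbf{Setup.} First, I will rewrite the first equation of \eqref{Keller-Segel-eq0} using $v_{xx}=\nu v-\mu u$ as $u_t=u_{xx}-\chi u_x v_x+u(r(x-ct)-\chi\nu v-(b-\chi\mu)u)$. Comparison with the logistic ODE $U'=U(r^{*}-(b-\chi\mu)U)$ and the choice of $M$ immediately give $0\le u(t,\cdot;u_0)\le M$ globally. From the Green's function representation $v(t,x)=\tfrac{\mu}{2\sqrt\nu}\int_\R e^{-\sqrt\nu|x-y|}u(t,y)\,dy$ a direct differentiation yields $\kappa v_x-\nu v=\tfrac{\mu}{2}\bigl[(\kappa-\sqrt\nu)I_{+}-(\kappa+\sqrt\nu)I_{-}\bigr]$, where $I_{+}(t,x):=\int_x^\infty e^{-\sqrt\nu(y-x)}u(t,y)dy$ and $I_{-}(t,x):=\int_{-\infty}^{x}e^{-\sqrt\nu(x-y)}u(t,y)dy$ are both nonnegative. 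Dropping the nonpositive $I_-$-contribution produces the key pointwise a priori bound $\kappa v_x-\nu v\le\tfrac{\mu(\kappa-\sqrt\nu)_{+}}{2}I_{+}$.

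\textbf{Barrier.} Fixing $\varepsilon>0$, the plan is to take $\bar u_\varepsilon(t,x):=e^{\varepsilon t}Me^{\kappa(c_\kappa t-x)}$ and show $u\le\bar u_\varepsilon$ everywhere. The hypothesis on $u_0$ gives $u(0,\cdot)\le\bar u_\varepsilon(0,\cdot)$; the bound $u\le M$ gives $u<\bar u_\varepsilon$ for $x\ll 0$ uniformly in $t$ on compact time intervals; and I expect classical parabolic tail estimates (with the chemotactic drift controlled via Lemma \ref{new-lm2}) to yield $u(t,x;u_0)/\bar u_\varepsilon(t,x)\to 0$ as $x\to+\infty$ on any compact time interval. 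If $u\le\bar u_\varepsilon$ ever fails there must be a first touching time $t^{*}>0$ with a point $x^{*}$ at which $u(t^{*},x^{*})=\bar u_\varepsilon(t^{*},x^{*})$ and $u\le\bar u_\varepsilon$ on $[0,t^{*}]\times\R$. Standard touching-point inequalities then give $u_t\ge(\kappa c_\kappa+\varepsilon)u$, $u_x=-\kappa u$, and $u_{xx}\le\kappa^{2}u$ at $(t^{*},x^{*})$.

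\textbf{Contradiction.} Substituting these into the reformulated PDE and using $\kappa c_\kappa=\kappa^{2}+r^{*}$ will rearrange to $\chi(\kappa v_x-\nu v)\ge(b-\chi\mu)\bar u_\varepsilon+(r^{*}-r(x^{*}-ct^{*}))+\varepsilon\ge(b-\chi\mu)\bar u_\varepsilon+\varepsilon$. On the other hand, since $u\le\bar u_\varepsilon$ on $[0,t^{*}]\times\R$, a direct integration gives $I_{+}(t^{*},x^{*})\le\bar u_\varepsilon(t^{*},x^{*})/(\kappa+\sqrt\nu)$, so the Setup estimate together with the hypothesis $(\kappa-\sqrt\nu)_{+}/(\kappa+\sqrt\nu)\le 2(b-\chi\mu)/(\chi\mu)$ will produce $\chi(\kappa v_x-\nu v)\le\tfrac{\chi\mu(\kappa-\sqrt\nu)_{+}}{2(\kappa+\sqrt\nu)}\bar u_\varepsilon\le(b-\chi\mu)\bar u_\varepsilon$, contradicting the preceding lower bound. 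Thus $u\le\bar u_\varepsilon$ globally, and letting $\varepsilon\downarrow 0$ yields the first one-sided bound; the second is proved identically. The main technical obstacle will be the tail estimate as $x\to+\infty$ needed to justify the first-touching argument in presence of the chemotactic drift; once that is in hand, everything reduces to a clean algebraic computation driven by the choice $c_\kappa=(\kappa^{2}+r^{*})/\kappa$ and the hypothesis on $b$.
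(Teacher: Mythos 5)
Your proof is correct and is essentially the argument the paper relies on: the paper itself only cites \cite[Lemma 2.3]{SaShXu}, and that argument is precisely your combination of the Green's-function identity $\kappa v_x-\nu v=\tfrac{\mu}{2}\bigl[(\kappa-\sqrt\nu)I_{+}-(\kappa+\sqrt\nu)I_{-}\bigr]$, the barrier $Me^{\kappa(c_\kappa t\mp x)}$, and the first-touching computation in which the hypothesis on $\kappa$ cancels the $(b-\chi\mu)\bar u_\varepsilon$ term. The tail issue you flag at $x\to+\infty$ is real but routine (compact support of $u_0$ plus the uniform bound $|\chi v_x|\le \chi\mu M/\sqrt{\nu}$ give super-exponential decay of $u(t,\cdot)$ on compact time intervals), so it does not constitute a gap.
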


\begin{proof}
It follows from the arguments of \cite[Lemma 2.3]{SaShXu}.
\end{proof}

Consider
\begin{equation}\label{Le2.6-eq1}
\begin{cases}
u_t=u_{xx}-\chi(uv_x)_x +u(r^*-bu),\quad x\in\R\cr
0=v_{xx}- \nu v+\mu u,\quad x\in\R.
\end{cases}
\end{equation}

\begin{lem}
\label{lem-001-4}
Assume that $b>2\chi\mu$. Then for any $u_0\in C_{\rm unif}^b(\R)$ with $\inf_{x\in\R}u_0(x)>0$,
$$
\lim_{t\to\infty} \|u(t,\cdot;u_0)-\frac{r^*}{b}\|_\infty=0,
$$
where $(u(t,x;u_0),v(t,x;u_0))$ is the solution of \eqref{Le2.6-eq1} with $u(0,x;u_0)=u_0(x)$.
\end{lem}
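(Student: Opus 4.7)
The plan is to drive $M(t):=\sup_{x\in\R}u(t,x;u_0)$ and $m(t):=\inf_{x\in\R}u(t,x;u_0)$ to the common equilibrium value $r^*/b$ by a contracting iteration between upper and lower envelopes. Substituting $v_{xx}=\nu v-\mu u$ from the second equation of \eqref{Le2.6-eq1} into the first yields
\begin{equation*}
u_t=u_{xx}-\chi u_x v_x+u\bigl(r^*-\chi\nu v-(b-\chi\mu)u\bigr),
\end{equation*}
and the elliptic part has the representation $v(t,x)=\frac{\mu}{2\sqrt{\nu}}\int_{\R}e^{-\sqrt{\nu}|x-y|}u(t,y)\,dy$, whose kernel integrates to $1/\nu$ and therefore gives the pointwise sandwich $\mu\,m(t)\le \nu v(t,x)\le \mu\,M(t)$ for all $t,x$.

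The central step is to derive the Dini-derivative inequalities
\begin{align*}
D^+M(t)&\le M(t)\bigl[r^*-\chi\mu\,m(t)-(b-\chi\mu)M(t)\bigr],\\
D_+ m(t)&\ge m(t)\bigl[r^*-\chi\mu\,M(t)-(b-\chi\mu)m(t)\bigr].
\end{align*}
Since \eqref{Le2.6-eq1} has $x$-independent coefficients, for fixed $t_0>0$ and a maximizing sequence $\{x_n\}$ of $u(t_0,\cdot)$, Lemma \ref{new-lm2} bounds the drift $\chi v_x$ uniformly, parabolic Schauder estimates give uniform interior $C^{1,2}$ regularity for $u(t,\cdot+x_n)$, and the elliptic representation does the same for $v(t,\cdot+x_n)$. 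Extracting a subsequential limit $(\tilde u,\tilde v)$ produces a solution of the same system attaining $M(t_0)$ at an interior spatial maximum $x=0$; evaluating the rewritten equation there with $\tilde u_x=0$, $\tilde u_{xx}\le 0$, and $\nu\tilde v(t_0,0)\ge \mu\,m(t_0)$ delivers the upper inequality, and the analogous minimizing argument delivers the lower one.

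With these in hand, I iterate from $M_0=\max\{\|u_0\|_\infty,\,r^*/(b-\chi\mu)\}$, $m_0=0$. The upper inequality alone gives $\limsup_{t\to\infty}M(t)\le r^*/(b-\chi\mu)=:M_1$; since $b>2\chi\mu$ makes $r^*-\chi\mu\,M_1=r^*(b-2\chi\mu)/(b-\chi\mu)>0$, ODE comparison in the lower inequality then produces $\liminf_{t\to\infty}m(t)\ge(r^*-\chi\mu M_1)/(b-\chi\mu)=:m_1>0$, where positivity of $m(t)$ on every finite time interval, needed to start the ODE comparison, comes from the hypothesis $\inf u_0>0$ together with the uniform bound on $\chi\nu v+(b-\chi\mu)u$. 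Defining recursively $M_{n+1}=(r^*-\chi\mu\,m_n)/(b-\chi\mu)$ and $m_{n+1}=(r^*-\chi\mu\,M_n)/(b-\chi\mu)$, one checks
\begin{equation*}
M_{n+1}-m_{n+1}=\tfrac{\chi\mu}{b-\chi\mu}(M_n-m_n),\qquad M_{n+1}+m_{n+1}-\tfrac{2r^*}{b}=-\tfrac{\chi\mu}{b-\chi\mu}\bigl(M_n+m_n-\tfrac{2r^*}{b}\bigr),
\end{equation*}
so the hypothesis $b>2\chi\mu$ makes the contraction factor $\chi\mu/(b-\chi\mu)$ strictly less than $1$ and both sequences converge to $r^*/b$. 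A standard $\varepsilon$-step argument propagates the ODE bounds to $\limsup_{t\to\infty} M(t)\le M_n$ and $\liminf_{t\to\infty} m(t)\ge m_n$ for every $n$, which together yield the claimed uniform convergence. The main technical obstacle is the translation-compactness step that justifies the differential inequalities on all of $\R$, where the spatial sup and inf need not be attained; the translation invariance of \eqref{Le2.6-eq1} together with the drift control from Lemma \ref{new-lm2} are exactly what allow the limit equation to be preserved at the extremizing point.
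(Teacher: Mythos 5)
Your proof is correct in substance, but it differs from the paper in that the paper gives no argument at all: it simply invokes \cite[Theorem 1.8]{SaSh1}, which is precisely the statement of the lemma (uniform convergence to $r^*/b$ for constant growth rate when $b>2\chi\mu$ and $\inf u_0>0$). What you have written is essentially a reconstruction of the proof of that cited theorem: the rewriting $u_t=u_{xx}-\chi u_xv_x+u(r^*-\chi\nu v-(b-\chi\mu)u)$, the kernel bound $\mu m(t)\le\nu v(t,x)\le\mu M(t)$, and the order-reversing iteration $M_{n+1}=(r^*-\chi\mu m_n)/(b-\chi\mu)$, $m_{n+1}=(r^*-\chi\mu M_n)/(b-\chi\mu)$ with contraction factor $\chi\mu/(b-\chi\mu)<1$ exactly when $b>2\chi\mu$ are all the right ingredients, and your algebra for the two recursions and for the positivity of $m_1$ checks out. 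The one place I would tighten the write-up is the Dini-derivative step: bounding $D^+\sup_x u$ via a translated limit solution requires the maximizing sequence at time $t_0$ to control the increment $M(t_0+h)-M(t_0)$, which is slightly delicate. It is cleaner to skip Dini derivatives entirely and compare $u$ directly, on $[T,\infty)$ where $m(t)\ge \underline m:=m_n-\varepsilon$, with the spatially homogeneous supersolution $U'(t)=U(r^*-\chi\mu\underline m-(b-\chi\mu)U)$, $U(T)=M(T)$, of the scalar parabolic equation with $v$ frozen as a coefficient (and symmetrically with a subsolution for the lower bound); this yields the same recursive estimates $\limsup M\le M_{n+1}$, $\liminf m\ge m_{n+1}$ with only the classical comparison principle. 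With that substitution your argument is a complete, self-contained proof of the lemma, which is arguably more informative than the paper's bare citation.
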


\begin{proof}
It follows from \cite[Theorem 1.8]{SaSh1}.
\end{proof}

\section{Persistence and spreading speeds in Case 1}

In this section, we study persistence and spreading speeds of \eqref{Keller-Segel-eq0} with $r(x)$ being as in {\bf Case 1}  and prove Theorems \ref{compact-support} and \ref{right-support}.  Throughout this section, we assume that {\bf (H1)} holds and $r(x)$ is as in {\bf Case 1}.

We first prove two lemmas.

Observe that for any given $\bar c$, let $u(t,x)=\tilde u(t,x-\bar c t)$ and $v(t,x)=\tilde v(t,x-\bar c t)$. Then \eqref{Keller-Segel-eq0} becomes
\begin{equation}\label{new-Keller-Segel-eq}
\begin{cases}
\tilde u_t=\Delta\tilde u+\bar c \tilde u_x-\nabla \cdot (\chi\tilde  u \nabla\tilde  v)+\tilde u(r(x-(c-\bar c)t)-b\tilde u),\quad x\in\R\cr
0 =\Delta \tilde v-  \nu \tilde v +\mu \tilde u,\quad x\in\R.
\end{cases}
\end{equation}
In the following,  $(u(t,x;u_0),v(t,x;u_0))$ denotes the solution of \eqref{Keller-Segel-eq0} with
$u(0,x;u_0)=u_0(x)$, and $(\tilde u(t,x;u_0),\tilde v(t,x;u_0))$ denotes the solution of \eqref{new-Keller-Segel-eq} with
$\tilde u(0,x;u_0)=u_0(x)$.

For any given $0<\epsilon<2\sqrt{r^*}$, fix $\bar r<r^*$  such that
\begin{equation}
\label{bar-r-eq}
4\bar r -\bar c^2\ge \epsilon \sqrt{r^*}\quad \forall\,\,   -2\sqrt{r^*}+\epsilon \le  \bar c  \le  2\sqrt {r^*}-\epsilon.
\end{equation}
 Let
\begin{equation}
\label{l-eq}
l=\frac{2\pi}{(\epsilon\sqrt{r^*})^{\frac{1}{2}}}
\end{equation}
and
\begin{equation}
\label{lambda-eq}
\lambda(\bar c,\bar r)=\frac{4\bar r-\bar c^2-\frac{\pi^2}{l^2}}{4}.
\end{equation}
 Then $\lambda(\bar c,\bar r)\ge  \frac{3\epsilon\sqrt {r^*}}{16}>0$  for any $ -2\sqrt{r^*}+\epsilon  \le \bar c \le   2\sqrt {r^*}-\epsilon$.

\begin{lem}
\label{lem-001-2}
For given $0<\epsilon<2\sqrt{r^*}$, {let $\bar r$ and $l$ be as in \eqref{bar-r-eq} and
\eqref{l-eq}. Then
for any $ -2\sqrt{r^*}+\epsilon \leq \bar c \leq  2\sqrt {r^*}-\epsilon$, $\lambda(\bar c,\bar r)$ which is defined as in \eqref{lambda-eq}} is the principal eigenvalue of
\begin{equation}
\label{ev-eq1}
\begin{cases}
\phi_{xx}+\bar c\phi_x+\bar r \phi=\lambda \phi,\quad -l<x<l\cr
\phi(-l)=\phi(l)=0,
\end{cases}
\end{equation}
and $\phi(x;\bar c,\bar r)=e^{-\frac{\bar c}{2}x}{ \cos\frac{\pi}{2l}x}$ is a corresponding positive eigenfunction.
\end{lem}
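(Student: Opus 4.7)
The statement is a direct verification claim: we are given an explicit candidate eigenfunction $\phi(x;\bar c,\bar r)=e^{-\bar c x/2}\cos(\pi x/(2l))$ together with an explicit candidate eigenvalue $\lambda(\bar c,\bar r)=(4\bar r-\bar c^2-\pi^2/l^2)/4$, and we must check that the pair solves \eqref{ev-eq1} and that it is the \emph{principal} eigenpair. My plan is therefore computational in nature, with only a brief appeal to general theory at the very end.

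First, I would verify the boundary conditions. Since $\cos(\pi(\pm l)/(2l))=\cos(\pm \pi/2)=0$, we have $\phi(\pm l;\bar c,\bar r)=0$. Likewise $\phi$ is strictly positive on $(-l,l)$ because $\cos(\pi x/(2l))>0$ on that interval and the exponential factor is always positive.

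Next, I would compute $\phi_x$ and $\phi_{xx}$ by writing $\phi=f\psi$ with $f(x)=e^{-\bar c x/2}$ and $\psi(x)=\cos(\pi x/(2l))$, so that $f_x=-(\bar c/2)f$ and $\psi_{xx}=-(\pi^2/(4l^2))\psi$. Using the product rule,
\begin{align*}
\phi_x &= -\tfrac{\bar c}{2} f\psi + f\psi_x,\\
\phi_{xx} &= \tfrac{\bar c^2}{4}f\psi - \bar c\, f\psi_x + f\psi_{xx}.
\end{align*}
Substituting into the left-hand side of \eqref{ev-eq1}, the $\bar c f\psi_x$ terms cancel, yielding
\begin{equation*}
\phi_{xx}+\bar c\phi_x+\bar r\phi = \Bigl(-\tfrac{\bar c^2}{4}-\tfrac{\pi^2}{4l^2}+\bar r\Bigr)\phi=\lambda(\bar c,\bar r)\,\phi,
\end{equation*}
which is exactly the eigenvalue equation. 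This identifies $\lambda(\bar c,\bar r)$ as an eigenvalue with eigenfunction $\phi$.

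Finally, to conclude that this is the \emph{principal} eigenvalue, I would invoke standard Sturm--Liouville (or Krein--Rutman) theory: for the second-order elliptic operator $L\phi:=\phi_{xx}+\bar c\phi_x+\bar r\phi$ on $(-l,l)$ with Dirichlet boundary conditions, there is a unique eigenvalue whose associated eigenfunction does not change sign on $(-l,l)$, and that eigenvalue is the principal one. Since $\phi(x;\bar c,\bar r)$ we exhibited is strictly positive on $(-l,l)$ and vanishes only at the endpoints, $\lambda(\bar c,\bar r)$ must be this principal eigenvalue. I expect no serious obstacle here; the only thing to be careful about is the bookkeeping on the derivative computation, but the cancellation of first-order terms after absorbing the drift with the exponential weight $e^{-\bar c x/2}$ is a standard trick and makes the calculation clean.
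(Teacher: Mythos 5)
Your computation is correct and matches the paper's approach exactly: the paper's entire proof is ``It follows from direct calculations,'' and your verification of the eigenvalue identity (with the clean cancellation of the $\bar c f\psi_x$ terms) together with the positivity of $\cos(\pi x/(2l))$ on $(-l,l)$ and the standard characterization of the principal eigenvalue as the one with a sign-definite eigenfunction is precisely the calculation being referenced. No issues.
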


\begin{proof}
It follows from direct calculations.
\end{proof}

Consider
\begin{equation}
\label{aux-new-eq01}
\begin{cases}
u_t=u_{xx}+\bar c u_x-A(t,x)u_x+u (r^* -2 \varepsilon_R M- C_{R,p} u^{\frac{1}{p}}-(\bar b-\chi\mu)u), \quad -l<x<l\cr
u(t,-l)=u(t,l)=0,
\end{cases}
\end{equation}
where $R\gg 1$ is such that $r^*-2 \varepsilon_R M>\bar r$,  and $A(t,x)$ is globally H\"older continuous in $t\in\R$ and
$x\in [-l,l]$ with H\"older exponent $0<\alpha<1$ and $\|A(\cdot,\cdot)\|_\infty<\infty$.

\begin{lem}
\label{lem-001-3} For given $0<\epsilon<2\sqrt{r^*}$, {let $\bar r$ and $l$} be as in \eqref{bar-r-eq} and
\eqref{l-eq}.
There is $\eta>0$ such that for any $A(\cdot,\cdot)$ with $\|A(\cdot,\cdot)\|_\infty<\eta$,  any $\bar c\in {[ -2\sqrt{r^*}+\epsilon} ,2\sqrt{r^*}-\epsilon { ]}$, and
any $\bar b \in (\chi\mu,\infty)$,
\eqref{aux-new-eq01} has a unique positive bounded entire solution
$u(t,x;\bar c,\bar b, A)$ satisfying that
 \begin{equation}
 \label{aux-new-new-eq0}
 \inf_{-l+\delta\le x\le l-\delta,t\in\R, \bar c\in {[ -2\sqrt{r^*}+\epsilon} ,2\sqrt{r^*}-\epsilon { ]}} u(t,x;\bar c,\bar b,A)>0\quad \forall \, 0<\delta <l.
 \end{equation}
\end{lem}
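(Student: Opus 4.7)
The approach is the classical sub/super-solution plus $\omega$-limit construction, followed by a strict-monotonicity uniqueness argument. For the super-solution, I take the constant $M_0$ large enough that $C_{R,p}M_0^{1/p}\ge r^*$; since the quadratic coefficient $\bar b-\chi\mu$ is positive, $M_0$ is a super-solution of \eqref{aux-new-eq01} uniformly in $\bar c\in[-2\sqrt{r^*}+\epsilon,2\sqrt{r^*}-\epsilon]$, $\bar b\in(\chi\mu,\infty)$ and $A$ with $\|A\|_\infty\le\eta$. Any classical solution with $0\le u_0\le M_0$ stays in $[0,M_0]$, and interior parabolic Schauder estimates (the H\"older bound on $A$ gives the required $C^{\alpha/2,\alpha}$ control of the coefficients uniformly in $t$) provide compactness of both the shifted solutions $u(t+t_n,\cdot)$ and the shifted coefficients $A(t+t_n,\cdot)$. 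A diagonal extraction produces an entire solution $u(t,x;\bar c,\bar b,A)\in[0,M_0]$ of \eqref{aux-new-eq01} on all of $\R\times[-l,l]$.

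\textbf{Positivity and the uniform lower bound \eqref{aux-new-new-eq0}.} The core step is to show that the linearization at $u=0$,
\[
\mathcal{L}w:=w_{xx}+(\bar c-A(t,x))w_x+(r^*-2\varepsilon_R M)w,\qquad w(\pm l,t)=0,
\]
has a strictly positive principal Lyapunov exponent, uniformly in $\bar c$ and in $A$ with $\|A\|_\infty<\eta$. For $A\equiv 0$, this is the elliptic eigenvalue problem solved in Lemma \ref{lem-001-2} with $\bar r$ replaced by $r^*-2\varepsilon_R M$, and its principal eigenvalue is
\[
\lambda_0(\bar c)=(r^*-2\varepsilon_R M)-\tfrac{\bar c^2}{4}-\tfrac{\pi^2}{4l^2}=\lambda(\bar c,\bar r)+(r^*-2\varepsilon_R M-\bar r)\ge \tfrac{3\epsilon\sqrt{r^*}}{16}>0,
\]
using the choice of $R$ so that $r^*-2\varepsilon_R M>\bar r$. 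After the similarity transform $\tilde w=e^{\bar c x/2}w$, which symmetrizes the drift and makes $\mathcal L$ self-adjoint when $A\equiv 0$, the perturbed operator differs from the symmetrized one only by the bounded first-order term $-A\tilde w_x$ and the bounded zeroth-order term $\tfrac{\bar c}{2}A\tilde w$. Continuous dependence of the principal Lyapunov exponent on such $L^\infty$-bounded coefficients (classical for nonautonomous parabolic operators, and uniform over the compact set of $\bar c$) produces an $\eta>0$, depending only on $\epsilon$ and the structural constants, such that the principal Lyapunov exponent of $\mathcal L$ is at least $\lambda_0(\bar c)/2$ whenever $\|A\|_\infty<\eta$. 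A standard comparison argument (small initial data are trapped below by a small multiple of a positive sub-eigenfunction evolved by $\mathcal L$, and once the solution exceeds a threshold the logistic term controls it from below) then shows that any positive bounded solution is eventually uniformly bounded away from $0$ on every compact subset of $(-l,l)$; passing to the entire limit yields \eqref{aux-new-new-eq0}.

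\textbf{Uniqueness and the main obstacle.} Suppose $u_1,u_2$ are two positive bounded entire solutions of \eqref{aux-new-eq01} for the same $\bar c,\bar b,A$. Let $\rho^*=\sup\{\rho>0:u_1\ge \rho u_2\}$; the lower bound of the previous step applied to $u_2$ and the upper bound $M_0$ on $u_1$ give $\rho^*\in(0,\infty)$. If $\rho^*<1$, then since $g(u):=r^*-2\varepsilon_R M-C_{R,p}u^{1/p}-(\bar b-\chi\mu)u$ is strictly decreasing in $u>0$, the function $\rho^* u_2$ is a \emph{strict} sub-solution of \eqref{aux-new-eq01}. Choose $(t_n,x_n)$ with $u_1(t_n,x_n)/u_2(t_n,x_n)\to\rho^*$, translate in time, and pass to subsequential limits of $u_1(\cdot+t_n,\cdot)$, $u_2(\cdot+t_n,\cdot)$ and $A(\cdot+t_n,\cdot)$ (using the H\"older compactness of the latter). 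Applying the strong maximum principle in the interior, supplemented by Hopf's lemma if the limit point belongs to $\{\pm l\}$, forces $u_1^*\equiv\rho^*u_2^*$ in the limit; substituting into the two limiting PDEs yields $g(\rho^*u_2^*)=g(u_2^*)$, contradicting the strict monotonicity of $g$ on the positive range of $u_2^*$. Hence $\rho^*\ge 1$ and, by symmetry, $u_1\equiv u_2$. The main technical obstacle is the uniformity (in $\bar c$) of the positivity of the Lyapunov exponent of $\mathcal L$: because $-A(t,x)\partial_x$ is a \emph{first-order} perturbation, no pointwise sub-solution inequality built from the time-independent eigenfunction $\phi(x;\bar c,\bar r)$ alone can work, as $|\phi_x/\phi|$ blows up at the Dirichlet boundary; this is precisely where the smallness $\|A\|_\infty<\eta$ enters, via continuity of the principal Lyapunov exponent as a function of the drift coefficient.
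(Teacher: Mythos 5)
Your proposal is correct and follows essentially the same strategy as the paper: establish that the principal eigenvalue of the Dirichlet linearization on $[-l,l]$ is positive uniformly in $\bar c$, show this positivity is robust under small $L^\infty$ perturbations of the drift (the paper invokes Henry's continuity theorem for the time-one solution operator where you invoke continuity of the principal Lyapunov exponent), build the entire solution by comparison with small multiples of the principal eigenfunction, and prove uniqueness via a part-metric/ratio argument combining Hopf's lemma, the strong maximum principle, strict monotonicity of the nonlinearity, and compactness of time-translates of $A$. The remaining differences (monotone pullback iteration versus forward time-shifts, two-sided versus one-sided part metric) are cosmetic.
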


\begin{proof}
First, consider
\begin{equation}
\label{aux-new-eq02}
\begin{cases}
\tilde u_t=\tilde u_{xx}+\bar c\tilde  u_x-A(t,x)\tilde u_x+\bar r \tilde u , \quad  -l<x<l\cr
\tilde u(t,-l)=\tilde u(t,l)=0.
\end{cases}
\end{equation}
For given $t_0\in\R$, let $\tilde u(t,x;t_0,\phi(\cdot;\bar c,\bar r),A)$ be the solution of \eqref{aux-new-eq02}
with $\tilde u(t_0,x;t_0,\phi(\cdot;\bar c,\bar r),A)= \phi(x;\bar c,\bar r)$.
 By \cite[Theorem 3.4.1]{Hen},
\begin{equation}
\label{aux-new-new-eq2}
\lim_{\|A(\cdot,\cdot)\|_{\infty}\to 0} \|\tilde u(t,\cdot;t_0,\phi(\cdot;\bar c,\bar r),A)-e^{\lambda(\bar c,\bar r)(t-t_0)}\phi(\cdot;\bar c,\bar r)\|_{C^1([-l,l])}=0
\end{equation}
uniformly in $t\in [t_0,t_0+1]$, $t_0\in\R$, and  $\bar c\in {[ -2\sqrt{r^*}+\epsilon} ,2\sqrt{r^*}-\epsilon { ]}$.
This implies that there is $\eta>0$ such that for any $A(\cdot,\cdot)$ with $\|A(\cdot,\cdot)\|_\infty<\eta$, and  any
 $\bar c\in {[ -2\sqrt{r^*}+\epsilon} ,2\sqrt{r^*}-\epsilon { ]}$,
\begin{equation}
\label{aux-new-new-eq3}
 \tilde u(t_0+1,x;t_0,\phi(\cdot;\bar c,\bar r),A)\ge  e^{\lambda(\bar c,\bar r) /2}\phi(x;\bar c,\bar r)\,\,\,\forall\,  -l\le x\le l,\,\, t_0\in\R.
\end{equation}

 Next, suppose that $\|A(\cdot,\cdot)\|_\infty<\eta$.
 Let $u(t,x;t_0,\sigma \phi(\cdot;\bar c,\bar r))$ be the solution of \eqref{aux-new-eq01} with
 $u(t_0,x;t_0,\sigma\phi(\cdot;\bar c,\bar r))=\sigma\phi(x;\bar c,\bar r)$.
 Note that
$$
\lim_{\sigma\to 0} \sup_{t\in [t_0,t_0+1], -l\le x\le l}  u(t,x;t_0,\sigma\phi(\cdot;\bar c,\bar r))=0
$$
uniformly in $t_0\in\R$ and in  $\bar c\in {[ -2\sqrt{r^*}+\epsilon} ,2\sqrt{r^*}-\epsilon { ]}$.
Hence there is $\sigma_0>0$ such that for any $0<\sigma\le \sigma_0$, and $\bar c\in {[ -2\sqrt{r^*}+\epsilon} ,2\sqrt{r^*}-\epsilon { ]}$,
$$
r^* - 2  \varepsilon_R M- C_{R,p}  u^{\frac{1}{p}}(t,x;t_0,\sigma\phi)-(\bar b-\chi\mu)u (t,x;t_0,\sigma\phi)>\bar r\,\,
\forall\, t\in [t_0,t_0+1],\,\, -l\le x\le l,\,\, t_0\in\R.
$$
This together with the comparison principle for parabolic equations implies that for $0<\sigma \le \sigma_0$ and $\bar c\in {[ -2\sqrt{r^*}+\epsilon} ,2\sqrt{r^*}-\epsilon { ]}$,
\begin{equation}
\label{aux-new-new-eq1}
u(t,x;t_0,\sigma \phi(\cdot;\bar c,\bar r))\ge \sigma \tilde u(t,x;t_0,\phi,A)\,\,\,\forall\, t\in [t_0,t_0+1],\,\, -l\le x\le l,\,\, t_0\in\R.
\end{equation}
Then by \eqref{aux-new-new-eq3}, we have
\begin{equation}
\label{aux-new-new-eq3-1}
  u(t_0+1,x;t_0,\sigma \phi(\cdot;\bar c,\bar r))  \ge \sigma  e^{\lambda(\bar c,\bar r) /2}\phi(x;\bar c,\bar r)\,\,\,\forall\,  -l\le x\le l,\,\, t_0\in\R
\end{equation}
for any $0<\sigma\le \sigma_0$ and $\bar c\in {[ -2\sqrt{r^*}+\epsilon} ,2\sqrt{r^*}-\epsilon { ]}$.

\medskip

Now, by  \eqref{aux-new-new-eq3-1} and  the comparison principle for parabolic equations, we have
\begin{equation}
\label{aux-new-new-eq4-0}
u(k,x;-n,\sigma\phi(\cdot;\bar c,\bar r))>\sigma\phi(x;\bar c,\bar r) \quad \forall \,\, k\ge -n+1,\,\,  -l<x<l
\end{equation}
and then
\begin{equation}
\label{aux-new-new-eq4}
u(k,x;-(n+1),\sigma\phi(\cdot;\bar c,\bar r))>u(k,x;-n,\sigma\phi(\cdot;\bar c,\bar r))\quad \forall \,\,k\ge -n+1,\,\, -l<x<l.
\end{equation}
Let $u_n(t,x)=u(t,x;-n,\sigma\phi(\cdot;\bar c,\bar r))$.
Then $\lim_{n\to \infty} u_n(t,x)$ exists and
 $u(t,x)=\lim_{n\to\infty}u_n(t,x)$ is a solution of  \eqref{aux-new-eq01}.
 By \eqref{aux-new-new-eq4-0} and \eqref{aux-new-new-eq4}, $u(t,x)$ is a positive bounded entire solution of \eqref{aux-new-eq01} satisfying \eqref{aux-new-new-eq0}.

\medskip

 Finally, we prove that when $\|A(\cdot,\cdot)\|_\infty<\eta$, \eqref{aux-new-eq01} has a unique positive bounded entire solution  satisfying \eqref{aux-new-new-eq0}.
 Suppose that $u(t,x)$, $v(t,x)$ are two positive bounded  entire solutions of \eqref{aux-new-eq01}  satisfying \eqref{aux-new-new-eq0}. By  Hopf's lemma,
 $$
 u_x(t,-l)>0,\,\, u_x(t,l)<0,\,\, v_x(t,-l)>0,\,\, v_x(t,l)<0\quad \,\, \forall\, t\in\R.
 $$
 This implies that for any $t\in\R$, the following set is not empty,
 $$
 \{\gamma>1\,|\, \frac{1}{\gamma} u(t,x)\le v(t,x)\le \gamma u(t,x)\quad \forall\,\, -l<x<l\}.
 $$
 Hence we can define
 $$
 \rho(u(t,\cdot),v(t,\cdot))=\inf\{\ln \gamma \,|\, \frac{1}{\gamma} u(t,x)\le v(t,x)\le \gamma u(t,x)\quad \forall\,\, -l<x<l\}.
 $$
 To prove the uniqueness of positive entire solutions  satisfying \eqref{aux-new-new-eq0}, it then suffices to prove $\rho(u(t,\cdot),v(t,\cdot)) \equiv 0$.

 Fix $t_0\in\R$. Suppose that $\gamma>1$ is such that
 $$
 \frac{1}{\gamma} u(t_0,x)\le v(t_0,x)\le \gamma u(t_0,x)\quad \forall\,\, -l<x<l.
 $$
 By the comparison principle for parabolic equations, we have
 $$
 \frac{1}{\gamma} u(t,x;t_0,u(t_0,\cdot))<v(t,x;t_0,v(t_0,\cdot))<\gamma u(t,x;t_0,u(t_0,\cdot))\quad \forall\,\, t>t_0,\,\, -l<x<l.
 $$
 This together with Hopf's lemma implies that there is $1<\gamma(t)<\gamma$ such that
 $$
 \frac{1}{\gamma(t)} u(t,x;t_0,u(t_0,\cdot))<v(t,x;t_0,v(t_0,\cdot))<\gamma(t) u(t,x;t_0,u(t_0,\cdot))\quad \forall\,\, t>t_0,\,\, -l<x<l.
 $$
 Hence if $\rho(u(t_0,\cdot),v(t_0,\cdot)))\not =0$ for some $t_0\in\R$, then  $\rho(u(t,\cdot),v(t,\cdot))$ is strictly decreasing as $t$ increases.

 Assume that $\rho(u(t,\cdot),v(t,\cdot))\not \equiv 0$. Let $\rho^*=\lim_{t\to -\infty}  \rho(u(t,\cdot),v(t,\cdot))  $.
 Then $\rho^*>0$. Choose a sequence $t_n\to -\infty$. Without loss of generality, we may assume that
 $$
 A(t_n+t,x)\to A^*(t,x),\quad u(t_n+t,x)\to u^*(t,x),\quad v(t_n+t,x)\to v^*(t,x)
 $$
 as $n\to\infty$ uniformly in $x\in [-l,l]$ and locally uniformly in $t\in\R$.
 We then have that $u^*(t,x)$ and $v^*(t,x)$ are positive solutions of  \eqref{aux-new-eq01} with $A(t,x)$ being replaced by $A^*(t,x)$ and
 satisfy \eqref{aux-new-new-eq0}.
 Moreover,
 $$
 \rho(u^*(t,\cdot),v^*(t,\cdot))=\rho^*>0\quad \forall\, t\in\R.
 $$
 But by the arguments in the above, $\rho(u^*(t,\cdot),v^*(t,\cdot))$ is strictly decreasing as $t$ increases, which is a contradiction.

 Therefore, $\rho(u(t,\cdot),v(t,\cdot))\equiv 0$ and $u(t,x)\equiv v(t,x)$.
\end{proof}

Next,  we prove Theorem \ref{compact-support}.

\begin{proof}[Proof of Theorem \ref{compact-support}]

(1) { Suppose $c>c^*=2\sqrt{r^*}$.}
Choose $\bar c$ and  $0<\kappa\le \sqrt{r^*}$ such that
$$\frac{(\kappa-\sqrt{\nu})_{+}}{(\kappa +\sqrt{\nu})}\leq \frac{2(b-\chi\mu)}{\chi\mu}$$
and
$$
c^*=2\sqrt {r^*}<c_\kappa<\bar c<c.
$$
By Lemma \ref{lem-001-1}(i),
$$
\lim_{t\to\infty}\sup_{|x|\ge \bar ct }  u(t,x;u_0)=0.
$$

We claim that $\lim_{t\to\infty}\sup_{x\in\R}  u(t,x;u_0)=0$. For otherwise, there are $\delta_0>0$, $t_n\to\infty$ and $x_n\in (-\bar c t_n, \bar c t_n)$
such that
$$
 u(t_n,x_n;u_0)\ge \delta_0\quad \forall\, n\ge 1.
$$
Let $u_n(t,x)=u(t+t_n,x+x_n;u_0)$ and $v_n(t,x)= v(t+t_n,x+x_n;u_0)$. Note  that
  $x_n-c t_n\to  -\infty$ as $n\to\infty$.
  Without loss of generality, we may assume that there is $(u^*(t,x),v^*(t,x))$ such that
$$
\lim_{n\to\infty}(u_n(t,x),v_n(t,x))=(u^*(t,x),v^*(t,x))
$$
locally uniformly in $(t,x)\in\R\times \R$, and $(u^*(t,x),v^*(t,x))$ satisfies
\begin{equation}\label{new-Keller-Segel-eq1}
\begin{cases}
u_t^*=\Delta u^*-\nabla \cdot (\chi u^* \nabla v^*)+u^*(r(-\infty)-bu^*),\quad t\in\R,\,\, x\in\R\cr
0 =\Delta v^*-  \nu v^* +\mu u^*,\quad t\in\R,\,\, x\in\R.
\end{cases}
\end{equation}
By $r(-\infty)<0$, it can be proved that $u^*(t,x)\equiv 0$, which contradicts to
$$u^*(0,0)=\lim_{n\to\infty}  u(t_n,x_n;u_0)\ge \delta_0.$$

Therefore, $\lim_{t\to\infty} \sup_{x\in\R}  u(t,x;u_0)=0$.

\medskip

(2) { Suppose $-c^*\leq c<c^*=2\sqrt{r^*}$.} We first prove
  $$
\lim_{t\to\infty}\sup_{x\le (c-\varepsilon)t}u(t,x;u_0)=0.
$$
Assume that the result does not hold. Then there are {  constants $\delta_0>0$, $\varepsilon_0>0$, and a sequence
$\{(t_n, x_n)\}_{n\in\N}$,  $t_n\to\infty$, $x_n\in (-\infty,(c-\varepsilon_0)t_n]$
such that }
$$
u(t_n,x_n;u_0)\ge \delta_0\quad \forall\, n\ge 1.
$$
Let $u_n(t,x)=u(t+t_n,x+x_n;u_0)$ and $v_n(t,x)=v(t+t_n,x+x_n;u_0)$.    Note  that
  $x_n-c t_n\to  -\infty$ as $n\to\infty$.
Similarly, without loss of generality, we may assume that there is $(u^*(t,x), v^*(t,x))$ such that
$$
\lim_{n\to\infty}(u_n(t,x),v_n(t,x))=(u^*(t,x),v^*(t,x))
$$
locally uniformly in $(t,x)\in\R\times \R$, and $(u^*(t,x),v^*(t,x))$ satisfies \eqref{new-Keller-Segel-eq1}.
Again, by $r(-\infty)<0$, it can be proved that $u^*(t,x)\equiv 0$, which contradicts to
$$u^*(0,0)=\lim_{n\to\infty} u(t_n,x_n;u_0)\ge \delta_0.$$
Therefore, $\lim_{t\to\infty} \sup_{x\le (c-\varepsilon)t} u(t,x;u_0)=0$.

Next, we prove
$$
\lim_{t\to\infty}\sup_{x\ge (c^*+\varepsilon)t}u(t,x;u_0)=0.
$$
For any $\varepsilon>0$, let $\kappa=\sqrt{r^*}$, then $c_{\kappa}=2\sqrt{r^*}<c^*+\varepsilon$.
Since $0<\chi\mu<b$, $\big(1+\frac{1}{2}\frac{(\sqrt{r^*}-\sqrt{\nu})_+}{(\sqrt{r^*}+\sqrt{\nu})}\big)\chi\mu { \leq} b$, by Lemma \ref{lem-001-1}(i),
$$
u(t,x;u_0)\le Me^{-\kappa(|x|-c_{\kappa}t)},
$$
and
$$
\sup_{x\ge (c^*+\varepsilon)t} u(t,x;u_0)\le Me^{-\kappa(c^*+\varepsilon-c_\kappa)t}\to0 \quad {\rm as} \quad  t\to\infty.
$$
Therefore, $\lim_{t\to\infty}\sup_{x\ge (c^*+\varepsilon)t}u(t,x;u_0)=0$.

\smallskip

We now prove
\begin{equation}\label{Tm1.1.2liminf}
\liminf_{t\to\infty}\inf_{(c+\varepsilon)t\le x\le (c^*-\varepsilon)t} u(t,x;u_0)>0.
\end{equation}

To this end, for any $0<\varepsilon<\frac{c^*-c}{2}$,  {let $\bar r$ and $l$ be as in \eqref{bar-r-eq} }and
\eqref{l-eq}.
Fix a $\bar c$ satisfying ${ -c^*+\varepsilon\leq}  c+\varepsilon\le \bar c\le c^*-\varepsilon$  and set $M=\max\{\|u_0\|_{\infty},\frac{r^*}{b-\chi\mu}\}$. {Let $\lambda(\bar c, \bar r)$ be as in \eqref{lambda-eq}.} By \eqref{E2-1},  for any  $R\gg 1, { p>1} $, $(\tilde u(t,x;u_0),\tilde v(t,x;u_0))$ satisfies
\begin{equation}\label{AA-eq1}
\tilde u_t\geq \tilde u_{xx}+\bar c \tilde u_x-\chi \tilde v_x\tilde u_x+\tilde u(r(x-(c-\bar c)t)-\varepsilon_R M- C_{R,p} \tilde u^{\frac{1}{p}}-(b-\chi\mu)\tilde u), \quad t\geq 1,\ x\in\R.
\end{equation}
Let $p=2$ and $\eta$ be as in
Lemma \ref{lem-001-3}. Choose  $R\gg 1$  such that $
\varepsilon_{R}M<\frac{\eta}{4}$,
\begin{equation}\label{AA-eq2-1}
|\chi \tilde v_x|\leq C_{R}\sqrt {\tilde u(t,x;u_0)}+\frac{\eta}{4}, \quad t\geq 1,\ x\in\R.
\end{equation}
Define
$$
A(t,x)=\begin{cases}
\frac{\chi \tilde v_x(1,x;u_0)}{\max\{1, |\chi \tilde v_x(1,x;u_0)|\eta^{-1}\}}, \,\, &{\rm if} \,\, t< 1, \ x\in\R \cr
\frac{\chi \tilde v_x(t,x;u_0)}{\max\{1, |\chi \tilde v_x(t,x;u_0)|\eta^{-1}\}},     \,\, &{\rm if}\,\, t\geq 1,\ x\in\R.
\end{cases}
$$
 Note that $\tilde v_x(t,x;u_0)$ is globally H\"older continuous in $t\geq 1$ and $x\in \R$.  We then have that $A(t,x)$ is globally H\"older continuous in $t\in\R$ and
$x\in \R$. It is clear that  $\|A(\cdot,\cdot)\|_\infty<\eta$.

 Let $T>1$ be such that
$r(x-(c-\bar c)t)\ge r^*-\varepsilon_R M$ for $x\ge -l$ and $t\ge T$.
Choose $\bar b>b$ and also $\bar b \gg 1$ such that
 \begin{equation}\label{AA-eq2-2}
 C_{R}\sqrt{\frac{r^*}{\bar b-\chi\mu}}+\frac{r^*}{\bar b-\chi\mu}<\frac{\eta}{4},
 \end{equation}
 and
 $u(T,x;\bar c,\bar b, A)<\frac{r^*}{\bar b-\chi\mu}< \tilde u(T,x;u_0)$  for $-l\le x\le l$ and $\bar c\in [c+\varepsilon,c^*-\varepsilon]$, where
$u(t,x;\bar c,\bar b, A)$ is
 the unique  positive bounded entire solution of \eqref{aux-new-eq01}.

 \medskip

 Fix such $\bar b$. We first claim that for any $c+\varepsilon \le \bar c\le c^*-\varepsilon$,
\begin{equation}\label{AA-eq3}
\tilde u(t,x;u_0)\ge u(t,x;\bar c,\bar b, A)\quad \forall\, t\ge T,\,\, -l\le x\le l.
\end{equation}

Suppose, by contradiction that \eqref{AA-eq3} does not hold. Then there { are $c+\varepsilon \le \bar c\le c^*-\varepsilon$ and } $t_{\rm inf}\in[T,\infty)$ satisfying
$$
t_{\rm inf}:=\inf\{t\in(T,\infty)\, |\,  \exists  x_t\in\R, {\rm satisfying }\quad u(t, x_t; \bar c,\bar b, A)>\tilde u(t,x_t; u_0), |x_t|\leq l\}.
$$
Note that
$$
 u(T,x;\bar c,\bar b, A)<\tilde u(T,x;u_0) \quad \forall -l\le x\le l
$$
Hence
$$t_{\rm inf}>T. $$
Moreover, note that $u(t,-l;\bar c,\bar b, A)=u(t,l;\bar c,\bar b, A)=0$ for any $t\in\R$, there is $x_{\rm inf}\in\R$ such that $|x_{\rm inf}|<l$,
\begin{equation}\label{AA-eq7}
\frac{r^*}{\bar b-\chi\mu}>u(t_{\rm inf},x_{\rm inf};\bar c,\bar b, A)=\tilde u(t_{\rm inf},x_{\rm inf};u_0),
\end{equation}
and
$$
u(t,x;\bar c,\bar b, A)<\tilde u(t,x;u_0),\quad\ |x|\leq l,\quad T\le t<t_{\rm inf}.
$$
 Hence there is $0<\delta\ll 1$ such that  $[t_{\rm inf}-\delta,t_{\rm inf}]\times [x_{\rm inf}-\delta,x_{\rm inf}+\delta]\subset \{(t,y)\,|\,|y|<l\}$ and
$$
A(t,x)=\chi \tilde v_x(t,x;u_0)
,\quad \forall\ t_{\rm inf}-\delta\le t\leq t_{\rm inf},\quad x_{\rm inf}-\delta\leq x\leq x_{\rm inf}+\delta.$$
Note that
$$
u(t_{\rm inf}-\delta,x;\bar c,\bar b, A)<\tilde u(t_{\rm inf}-\delta,x;u_0) \quad \forall\, x\in[x_{\rm inf}-\delta,x_{\rm inf}+\delta]
$$
and
$$
u(t,x_{\rm inf}\pm\delta;\bar c,\bar b, A)\leq \tilde u(t,x_{\rm inf}\pm\delta; u_0) \quad \forall\, t_{\rm inf}-\delta\leq t\leq t_{\rm inf}.
$$
Thus, by the comparison principle for parabolic equations, we have
$$
u(t,x;\bar c,\bar b, A)<\tilde u(t,x; u_0) \quad\forall\, t_{\rm inf}-\delta< t\leq t_{\rm inf}, \quad x_{\rm inf}-\delta< x< x_{\rm inf}+\delta.
$$
In particular,
$$
u(t_{\rm inf},x_{\rm inf};\bar c,\bar b, A)<\tilde u(t_{\rm inf},x_{\rm inf};u_0).
$$
Which contradicts to \eqref{AA-eq7}.

\medskip

By Lemma \ref{lem-001-3},
$$
\inf_{-l+\delta\le x\le l-\delta,c+\varepsilon\le \bar c\le c^*-\varepsilon, t\ge 1}u(t,x;\bar c,\bar b, A)>0, \quad \forall\,\, 0<\delta <l.
$$
This together with \eqref{AA-eq3} implies that
$$
\liminf_{t\to\infty}\inf_{-l+\delta\le x\le l-\delta,c+\varepsilon\le \bar c\leq c^*-\varepsilon}\tilde  u(t,x;u_0)>0,  \quad \forall \, 0<\delta <l.$$
Hence
$$
\liminf_{t\to\infty}\inf_{-l+\bar ct+\delta\le x\le l+\bar ct-\delta, c+\varepsilon\leq \bar c\leq c^*-\varepsilon} u(t,x;u_0)>0,  \quad \forall \,\, 0<\delta <l.$$
It then follows that
$$
\liminf_{t\to\infty}\inf_{-l+(c+\varepsilon)t+\delta\le x\le l+(c^*-\varepsilon)t-\delta} u(t,x;u_0)>0,  \quad \forall \, 0<\delta <l.$$
and
$$
\liminf_{t\to\infty}\inf_{(c+\varepsilon)t\le x\le (c^*-\varepsilon)t} u(t,x;u_0)>0.
$$

Finally,  suppose that $2\chi\mu<b$. We prove
\begin{equation}\label{tm1.1-2}
\lim_{t\to\infty} \sup_{(c+\varepsilon)t\le x\le (c^*-\varepsilon)t}|u(t,x;u_0)-\frac{r^*}{b}|=0.
\end{equation}
Suppose  by contraction that the result does not hold. Then there are constants $0< \varepsilon< \frac{c^*-c}{2}$, $\delta>0$, and a sequence $\{(x_n,t_n)\}_{n \in \N}$ such that $t_n \to \infty$, $t_n(c+\varepsilon)\leq x_n\leq t_n(c^*-\varepsilon)$, and
\begin{equation}\label{eq000}
|u(t_n,x_n; u_0)-\frac{r^*}{b}|\geq \delta \quad  \forall\, n\geq 1.
\end{equation}
For every $n\geq 1$, define $(u_n(t,x),v_n(t,x))=(u(t+t_n,x+x_n;u_0),v(t+t_n,x+x_n;u_0))$. By a priori  estimates for parabolic equations, without loss of generality,  we may suppose that $(u_n(t,x),v_n(t,x))\to (u^*(t,x),v^*(t,x))$ locally uniformly in $C^{1,2}(\R\times\R)$. Furthermore, $(u^*(t,x)$, $v^*(t,x))$  is an entire solution of \eqref{Le2.6-eq1}.

Choose $0<\tilde\varepsilon<\varepsilon<\frac{c^*-c}{2} $. For every $x\in \R$ and $t\in \R$, we have
\begin{align}
x+x_n&\leq x+t_n(c^*-\varepsilon) \cr
&=(c^*-\tilde\varepsilon)(t_n+t)-(\varepsilon-\tilde\varepsilon)(t_n-\frac{x-(c^*-\tilde\varepsilon)t}{\varepsilon-\tilde\varepsilon})\cr
&\leq (t+t_n)(c^*-\tilde\varepsilon)
\end{align}
whenever $t_n\geq \frac{\|x\|+(c^*-\tilde\varepsilon)|t|}{\varepsilon-\tilde\varepsilon}$.
On the other hand, For every $x\in \R$ and $t\in \R$, we have
\begin{align}
x+x_n&\geq x+t_n(c+\varepsilon) \cr
&=(c+\tilde\varepsilon)(t_n+t)+(\varepsilon-\tilde\varepsilon)(t_n-\frac{(c+\tilde\varepsilon)t-x}{\varepsilon-\tilde\varepsilon})\cr
&\geq (t+t_n)(c+\tilde\varepsilon)
\end{align}
whenever $t_n\geq \frac{\|x\|+(c+\tilde\varepsilon)|t|}{\varepsilon-\tilde\varepsilon}$.
Thus, it follows that
$$(t+t_n)(c+\tilde\varepsilon)\leq x+x_n\leq (t+t_n)(c^*-\tilde\varepsilon)$$
whenever $t_n\geq \max\{\frac{\|x\|+(c^*-\tilde\varepsilon)|t|}{\varepsilon-\tilde\varepsilon}, \frac{\|x\|+(c+\tilde\varepsilon)|t|}{\varepsilon-\tilde\varepsilon} \}$.
Note that
$$
u^*(t,x)=\lim_{n \to \infty}u(t+t_n,x+x_n;u_0)\geq \liminf_{s \to \infty}\inf_{s(c+\tilde\varepsilon)\leq y\leq s(c^*-\tilde\varepsilon)}u(s,y;u_0) >0
$$
for every $(t,x)\in \R \times \R$. Hence $\inf_{(t,x)\in \R \times \R } u^*(t,x)>0$.
By Lemma \ref{lem-001-4},
we must have $u^*(t,x)=\frac{r^*}{b}$ for every $(t,x)\in \R\times\R$. {In particular}, $u^*(0,0)=\frac{r^*}{b}$, which contradicts to \eqref{eq000}.

\medskip

 (3) First, let $\kappa=\sqrt{r^*}$,
 for any $\bar c$ satisfies
$$
c^*=2\sqrt {r^*} =c_\kappa<\bar c<|c|.
$$
By Lemma \ref{lem-001-1}(i),
$$
\lim_{t\to\infty}\sup_{|x|\ge \bar ct }  u(t,x;u_0)=0.
$$
This implies that for any $\varepsilon>0$,
$$
\lim_{t\to\infty}\sup_{|x|\ge (c^*+\varepsilon)t }  u(t,x;u_0)=0.
$$

Next,  for any $0<\varepsilon<c^*$,  {let $\bar r$ and  $l$ be as in \eqref{bar-r-eq} and
\eqref{l-eq}.
Fix a $\bar c$ satisfying ${ -c^*+\varepsilon\leq}  \bar c\le c^*-\varepsilon$. Let $\lambda(\bar c,\bar r)$ be as in
\eqref{lambda-eq}.}
By the similar arguments as those in the proof of \eqref{Tm1.1.2liminf}, it can be proved that
$$
\liminf_{t\to\infty}\inf_{-l+\bar ct+\delta\le x\le l+\bar ct-\delta, -c^*+\varepsilon\leq \bar c\leq c^*-\varepsilon} u(t,x;u_0)>0,  \quad \forall \, 0<\delta <l.
$$
Hence
$$
\liminf_{t\to\infty}\inf_{-l+(-c^*+\varepsilon)t+\delta\le x\le l+(c^*-\varepsilon)t-\delta} u(t,x;u_0)>0, \quad \forall \, 0<\delta <l$$
and then
$$
\liminf_{t\to\infty}\inf_{(-c^*+\varepsilon)t\le x\le (c^*-\varepsilon)t} u(t,x;u_0)>0.
$$
Moreover,  using the similar arguments as those in the proof of \eqref{tm1.1-2}, we can prove
if $2\chi\mu<b$, then
 $$
\lim_{t\to\infty} \sup_{(-c^*+\varepsilon)t\le x\le (c^*-\varepsilon)t}|u(t,x;u_0)-\frac{r^*}{b}|=0.
$$
\end{proof}

Now we prove   Theorem \ref{right-support}.

\begin{proof}[Proof of Theorem \ref{right-support}]
(1)   { Suppose $c\geq-c^*=-2\sqrt{r^*}$.} By the same arguments as those in Theorem \ref{compact-support}(2),
it can be proved that
$$\lim_{t\to\infty}\sup_{x\leq(c-\varepsilon)t}u(t,x;u_0)=0.$$

Next, we prove that
\begin{equation}\label{Tm1.2-1liminf-c}
\liminf_{t\to\infty}\inf_{x\ge  (c+\varepsilon)t}u(t,x;u_0)>0\quad \forall\,  \varepsilon>0.
\end{equation}

To this end, for any $\tilde \varepsilon>0 $, let $u(t,x;u_0)=\tilde u(t,x-(c+\tilde \varepsilon)t;u_0), v(t,x;u_0)=\tilde v(t,x-(c+\tilde \varepsilon)t;u_0) $ in \eqref{Keller-Segel-eq0}  and set $M=\max\{\|u_0\|_{\infty},\frac{r^*}{b-\chi\mu}\}$. By \eqref{E2-1}, it follows that, for any  $R\gg 1,{ p>1}$, $(\tilde u(t,x;u_0),\tilde v(t,x;u_0))$ satisfies
\begin{equation}\label{AA-eq1}
\tilde u_t\geq \tilde u_{xx}+(c+\tilde \varepsilon) \tilde u_x-\chi \tilde v_x\tilde u_x+\tilde u(r(x+\tilde \varepsilon t)-\varepsilon_R M- C_{R,p} \tilde u^{\frac{1}{p}}-(b-\chi\mu)\tilde u), \quad t\geq 1,\ x\in\R.
\end{equation}
Let $p=2$. Choose  $R\gg 1$, $0<\xi\ll 1$ { and $0<\epsilon\ll \min\{1, 2\sqrt{r^*}\}$}
such that $
\varepsilon_{R}M<\frac{\xi}{4}$,
\begin{equation}\label{AA-eq2-1}
|\chi \tilde v_x|\leq C_{R}\sqrt {\tilde u(t,x;u_0)}+\frac{\xi}{4}, \quad t\geq 1,\ x\in\R,
\end{equation}
and
\begin{equation}\label{AA-eq2-New1}
-c^*+\epsilon=-2\sqrt{r^*}+\epsilon \leq c +\tilde \varepsilon -\xi.
\end{equation}
Define
$$
B(t,x)=\frac{\chi \tilde v_x}{\max\{1, |\chi \tilde v_x|\xi^{-1}\}}, \quad t\geq 1,\ x\in\R.
$$
From this point, the remaining part of the proof is completed in four steps.

\smallskip

\noindent{\bf Step 1.} {\it  In this step we construct some sub-solution for \eqref{AA-eq1}.}

\smallskip

First, choose $0<\xi_1\ll 1$  satisfying
 \begin{equation}\label{AA-eq2-2}
 C_R\sqrt{\xi_1}+\xi_1<\frac{\xi}{4}.
 \end{equation}
Next, let $l$ be chosen as in \eqref{l-eq}.  Let $T>1$ be such that $r(x+\tilde \varepsilon t)\ge r^*-\varepsilon_R M$ for $x\ge -l$ and $t\ge T$.
 Let $\underline u_1(\cdot)\in C^b_{\rm unif}([-l,\infty))\setminus\{0\}$ be such that
$$
\underline u_1(-l)=0,\quad  \underline u^{'}_1(x)>0, \quad {\rm and} \quad \underline u_1(x)<\tilde u(T,x;u_0), \quad \forall \,\, x\geq -l
$$
 Choose $ -2\sqrt{r^*}+\epsilon \leq\bar c\leq\min\{2\sqrt{r^*}-\epsilon,(c+\tilde \varepsilon)-\xi\}$. Let $\underline u(t,x)$ be the solution of
\begin{equation}
\label{aux-new-eq011}
\begin{cases}
\underline u_t=\underline u_{xx}+\bar c\underline u_x+\underline u (r^* -2\varepsilon_R M- C_{R,p} \underline u^{\frac{1}{p}}-{ (\frac{r^*}{\xi_1}+(b-\chi\mu) ) } \underline u), \quad t >T,\ x>-l \cr
\underline u(t,-l)=0 \cr
\underline u(T,x)=\frac{\xi_1}{M+\xi_1}\underline u_1(x).
\end{cases}
\end{equation}

Note that $\underline u(t,x)\equiv \xi_1$ is a super-solution of \eqref{aux-new-eq011} and $\|\underline{u}(T,\cdot)\|_{\infty}<\xi_1$. Thus, by the comparison principle for parabolic equations that
$$
\underline{u}(t,x)<\xi_1, \quad\forall\ t\geq T, \ x\geq-l.
$$
Since $\underline u^{'}_1(x)>0$, we have $\underline u_{x}(t,x)>0$ { for any $t\geq T$, $x\geq -l$.}
 { Note that $|B(t,x)|<\xi$ for all $t\geq 1$, $x\in\R$. }
Thus $\underline u(t,x)$ satisfies
\begin{align*}
\underline{u}_t= &\underline u_{xx}+\bar c\underline u_x+\underline u (r^* -2\varepsilon_R M- C_{R,p} \underline u^{\frac{1}{p}}-{ (\frac{r^*}{\xi_1}+(b-\chi\mu) ) } \underline u)\cr
\le&\underline u_{xx}+(c+\tilde \varepsilon-B(t,x))\underline u_x+\underline u (r^* -2\varepsilon_R M- C_{R,p} \underline u^{\frac{1}{p}}-(b-\chi\mu)\underline u) \quad\forall\ t>T, \ x>-l.
\end{align*}

\smallskip

\noindent {\bf Step 2.} {\it In this step, we show that}
\begin{equation}\label{AA-Stp2eq1}
\liminf_{t\to\infty}\inf_{x\ge -l+\delta} \underline u(t,x)>0,{\quad  \forall \,\, \delta>0.  }
\end{equation}

\smallskip

{
Choose $\bar b>\frac{r^*}{\xi_1}+b$ and also }$\bar b\gg 1$ such that $u(T{ +1},x;\bar c,\bar b)<\underline u(T { +1} ,x)$  for $-l\le x\le l$, where $u(t,x;\bar c,\bar b)$ is the unique positive entire solution of \eqref {aux-new-eq01} with $A(t,x)=0$. { Fix such $\bar b$.}
 It follows from the comparison principle for parabolic equations that
 $$
 u(t,x;\bar c,\bar b)<\underline u(t,x), \quad t>T{ +1} ,\ -l<x<l.
 $$
 Repeating the same procedure, by induction, we get
 $$
 u(t,x-kl;\bar c,\bar b)<\underline u(t,x), \quad t>T{ +1}  ,\ (k-1)l<x<(k+1)l, \ k=0,1,2,\cdots
  $$
 There exists $\delta_0 >0$, such that
 $$\inf_{(k-1)l+\delta \leq x\leq (k+1)l-\delta, t\in\R}u(t,x-kl;\bar c,\bar b)>\delta_0,$$
 for any $0<\delta<l, \ k=0,1,2,\cdots$

 Therefore, we have
 $$
 \liminf_{t\to\infty}\inf_{x\geq -l+\delta}\underline u(t,x)>0,{\quad  \forall \,\, \delta>0.  }
 $$

 \noindent {\bf Step 3.} {\it  In this step we  show that}
\begin{equation}\label{AA-Ste3eq1}
\underline{u}(t,x)\leq \tilde u(t,x;u_0), \quad \forall\ x\geq -l , \ t\ge T.
\end{equation}

 First, note that
$$
\underline{u}(t,-l)=0<\tilde u(t,-l;u_0)\quad \forall\, \, t\ge T
$$
and
$$
\underline{u}(T,x)<\tilde u(T,x;u_0)\quad \forall\, x\ge -l.
$$
Note also that
$$
\tilde u_\infty(T):=\liminf_{x\to \infty}\tilde u(T,x;u_0)>\underline{u}_\infty(T):=\lim_{x\to\infty}\underline{u}(T,x).
$$
For given $t\ge T$, let
$$
\tilde u_\infty(t)=\liminf_{x\to \infty}\tilde u(t,x;u_0),\quad \underline{u}_\infty(t)=\lim_{x\to\infty}\underline{u}(t,x).
$$
We claim that
$$
\tilde u_\infty (t)>\underline{u}_\infty(t)\quad \forall\, t>T.
$$
In fact, for any given $t_0>T$, there is $x_n\to\infty$ such that
$$
\tilde u(t_0,x_n;u_0)\to \tilde u_\infty(t_0),\quad \underline{u}(t_0,x_n)\to \underline{u}_\infty(t_0)
$$
as $n\to\infty$.
Without loss of generality, we may assume that
$$
\tilde u(t,x+x_n;u_0)\to  \tilde u^*(t,x),\quad \underline{u}(t,x+x_n)\to \underline{u}^*(t)
$$
as $n\to\infty$ locally uniformly in $(t,x)\in (T,\infty)\times \R$.
By $\tilde u_\infty(T)>\underline{u}_\infty(T)$ and the comparison principle for parabolic equations, we have
$$
\tilde u^*(t,x)>\underline{u}^*(t)\quad \forall\, t>T,\quad x\in\R.
$$
In particular, we have
$$
\tilde u_\infty(t_0)=\tilde u^*(t_0,0)>\underline{u}_\infty(t_0).
$$
Hence the claim holds true.

Next, assume that there are $t>T$ and $x>-l$ such that $\tilde u(t,x;u_0)<\underline{u}(t,x)$.
Then there is $t_{\inf}>T$ such that
$$
\tilde u(t,x;u_0)>\underline{u}(t,x)\quad \forall\, T\le t<t_{\inf},\,\, x\ge -l
$$
and
$$
\inf_{x\ge -l}\big(\tilde u(t_{\inf},x;u_0)- \underline u(t_{\inf},x)\big)=0.
$$
By the above claim, there is $x_{\inf}\in (-l,\infty)$ such that
$$
\tilde u(t_{\inf},x_{\inf};u_0)=\underline{u}(t_{\inf},x_{\inf}).
$$
Then the similar arguments as those in the proof of \eqref{AA-eq3}, we have
$$
\tilde u(t_{\inf},x_{\inf};u_0)>\underline u(t_{\inf},x_{\inf}),
$$
which is a contradiction. Hence \eqref{AA-Ste3eq1} holds.

\smallskip

\noindent {\bf Step 4.} {\it  In this step, we prove \eqref{Tm1.2-1liminf-c}.}

By \eqref{AA-Stp2eq1} and \eqref{AA-Ste3eq1}, we deduce that $$
\liminf_{t\to\infty}\inf_{x\ge -l+\delta} \tilde u(t,x;u_0)>0, {\quad  \forall \,\, \delta>0.  }
$$
Since $u(t,x;u_0)=\tilde u(t,x-(c+\tilde \varepsilon)t)$, we have
$$
\liminf_{t\to\infty}\inf_{x\ge -l+(c+\tilde \varepsilon)t+\delta} u(t,x;u_0)>0\quad \forall \, \tilde \varepsilon>0, {\quad  \forall \,\, \delta>0.  }
$$
Hence,
$$
\liminf_{t\to\infty}\inf_{x\ge (c+\varepsilon)t} u(t,x;u_0)>0\quad\forall\, \varepsilon>0.
$$

\smallskip

Finally, we prove that  $$
\lim_{t\to\infty} \sup_{x\geq (c+\varepsilon)t}|u(t,x;u_0)-\frac{r^*}{b}|=0.
$$
 It can be proved by similar arguments as those in the proof of \eqref{tm1.1-2}.

 \medskip

 (2) Suppose $c<-c^*=-2\sqrt{r^*}$.
First, let $\kappa=\sqrt{r^*}$,
 for any $\bar c$ satisfies
$$
c<\bar c<-c^*=-2\sqrt {r^*}=-c_{\kappa}.
$$
By Lemma \ref{lem-001-1}(ii),
$$
\lim_{t\to\infty}\sup_{x\le \bar ct }  u(t,x;u_0)=0.
$$
This implies that for any $\varepsilon>0$,
$$
\lim_{t\to\infty}\sup_{x\le (-c^*-\varepsilon)t }  u(t,x;u_0)=0.
$$

Next, we prove
\begin{equation}\label{right-support-1}
\liminf_{t\to\infty}\inf_{x\ge  (-c^*+\varepsilon)t}u(t,x;u_0)>0\quad\forall\, \varepsilon>0.
\end{equation}
It suffices to prove that for any $0<\varepsilon<2\sqrt{r^*}$, \eqref{right-support-1} holds.

Let $0<\varepsilon< 2\sqrt{r^*}$ be given , let $u(t,x;u_0)=\tilde u(t,x-(-c^*+\varepsilon)t;u_0), v(t,x;u_0)=\tilde v(t,x-(-c^*+\varepsilon)t;u_0) $ in \eqref{Keller-Segel-eq0}  and set $M=\max\{\|u_0\|_{\infty},\frac{r^*}{b-\chi\mu}\}$. By \eqref{E2-1}, it follows that, for any  $R\gg 1,{ p>1}$, $(\tilde u(t,x;u_0),\tilde v(t,x;u_0))$ satisfies
\begin{equation}\label{3AA-eq1}
\tilde u_t\geq \tilde u_{xx}+(-c^*+\varepsilon) \tilde u_x-\chi \tilde v_x\tilde u_x+\tilde u(r(x+(-c^*+\varepsilon-c) t)-\varepsilon_R M- C_{R,p} \tilde u^{\frac{1}{p}}-(b-\chi\mu)\tilde u), \quad t\geq 1,\ x\in\R.
\end{equation}
Choose $0<\xi\ll \min\{1,\frac{\varepsilon}{2}\}$. Fix a $\bar c$ satisfying $-c^*+\frac{\varepsilon}{2}\leq \bar c\leq -c^*+\varepsilon-\xi$.
By the similar arguments as those in the proof of \eqref{Tm1.2-1liminf-c}, it can be proved that
$$
\liminf_{t\to\infty}\inf_{x\ge -l+\delta} \tilde u(t,x;u_0)>0, {\quad  \forall \,\, \delta>0.  }
$$

Since $u(t,x;u_0)=\tilde u(t,x-(-c^*+\varepsilon)t)$, we have
$$
\liminf_{t\to\infty}\inf_{x\ge -l+(-c^*+\varepsilon)t+\delta} u(t,x;u_0)>0, {\quad  \forall \,\, \delta>0.  }$$
Hence,
$$
\liminf_{t\to\infty}\inf_{x\ge (-c^*+\varepsilon)t} u(t,x;u_0)>0.
$$

\smallskip

Finally, we prove $$
\lim_{t\to\infty} \sup_{x\geq (-c^*+\varepsilon)t}|u(t,x;u_0)-\frac{r^*}{b}|=0.
$$
It can be proved using similar arguments as those in the proof of \eqref{tm1.1-2}.
\end{proof}

\section{Persistence and extinction in Case 2}

In this section, we study the persistence and extinction of solutions of \eqref{Keller-Segel-eq0} with $r(x)$ being as in {\bf Case 2}, and prove
Theorem \ref{persistence-extinction-thm1}.  Throughout this section, we assume that {\bf (H1)} holds and $r(x)$ is as in {\bf Case 2}.

We first prove some lemmas.

Consider
\begin{equation}
\label{aux-new-new-new-eq01}
\begin{cases}
u_t=u_{xx}+c u_x-A(t,x)u_x+u (r(x) - \varepsilon_R M- C_{R,p} u^{\frac{1}{p}}-(\bar b-\chi\mu)u), \quad -L<x<L\cr
u(t,-L)=u(t,L)=0.
\end{cases}
\end{equation}
Observe that $\varepsilon_R \to 0$ as $R\to\infty$ and
$$
\lambda_L(r(\cdot)-2\varepsilon_R M)=\lambda_L(r(\cdot))-2\varepsilon_R M,
$$
where $\lambda_L(\cdot)$ is defined as in \eqref{ev-eq0}.
Hence
$$
\lim_{R\to\infty} \lambda_L(r(\cdot)-2\varepsilon_R M)=\lambda_L(r(\cdot)).
$$

\begin{lem}
\label{new-new-lm1}
Suppose that $\lambda_\infty(r(\cdot))>0$. Then  there are $L^*>0$, $R^*>0$, and $\epsilon^*>0$  such that for any $L\ge L^*$ and
$R\ge R^*$, $\|A(\cdot,\cdot)\|_\infty<\epsilon^*$, \eqref{aux-new-new-new-eq01} has a {unique} positive {bounded} entire solution
$u^*(t,x;L,R,A(\cdot,\cdot))$ with
\begin{equation}
\label{aux-new-new-new-eq2}
\inf_{t\in\R,|x|\le L-\delta} u^*(t,x;L,R,A(\cdot,\cdot))>0\quad \forall\,\, 0<\delta<L.
\end{equation}
\end{lem}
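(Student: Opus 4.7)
The plan is to adapt, almost verbatim, the construction in the proof of Lemma~\ref{lem-001-3}, with the interval length $L$ now serving as the parameter that makes the linearized principal eigenvalue positive. Since $\lambda_L(r(\cdot))\nearrow\lambda_\infty(r(\cdot))>0$ as $L\to\infty$, I first choose $L^{*}>0$ so that $\lambda^{*}:=\lambda_{L^{*}}(r(\cdot))>0$. Using the translation identity $\lambda_L(r(\cdot)-c_0)=\lambda_L(r(\cdot))-c_0$ together with $\varepsilon_R\to 0$ as $R\to\infty$, I pick $R^{*}>0$ and a small $\delta>0$ so that for every $L\ge L^{*}$ and $R\ge R^{*}$,
\begin{equation*}
\lambda_L\bigl(r(\cdot)-\varepsilon_R M-\delta\bigr)\ge \lambda^{*}/2>0.
\end{equation*}
Denote by $\phi_{L,R,\delta}$ the $L^\infty$-normalized positive principal eigenfunction of $\phi_{xx}+c\phi_x+(r(x)-\varepsilon_R M-\delta)\phi=\lambda_L(r(\cdot)-\varepsilon_R M-\delta)\phi$ on $(-L,L)$ with zero boundary values.

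Next I consider the drift-perturbed linear problem $\tilde u_t=\tilde u_{xx}+c\tilde u_x-A(t,x)\tilde u_x+(r(x)-\varepsilon_R M-\delta)\tilde u$ on $(-L,L)$ with zero boundary data, and invoke \cite[Theorem~3.4.1]{Hen} to obtain $\epsilon^{*}>0$, uniformly in $L\ge L^{*}$, $R\ge R^{*}$ and $t_0\in\R$, such that whenever $\|A(\cdot,\cdot)\|_\infty<\epsilon^{*}$, the solution $\tilde u(t,x;t_0,\phi_{L,R,\delta},A)$ issuing from $\phi_{L,R,\delta}$ at time $t_0$ satisfies
\begin{equation*}
\tilde u(t_0+1,x;t_0,\phi_{L,R,\delta},A)\ge e^{\lambda^{*}/4}\phi_{L,R,\delta}(x), \quad -L\le x\le L.
\end{equation*}
A continuity argument then yields $\sigma_0>0$ so small that, for every $0<\sigma\le\sigma_0$ and every $t\in[t_0,t_0+1]$, the solution $u(t,x;t_0,\sigma\phi_{L,R,\delta})$ of \eqref{aux-new-new-new-eq01} stays below the threshold where $C_{R,p}u^{1/p}+(\bar b-\chi\mu)u\le\delta$, so that its reaction coefficient dominates $r(x)-\varepsilon_R M-\delta$. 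The comparison principle then gives
\begin{equation*}
u(t_0+1,x;t_0,\sigma\phi_{L,R,\delta})\ge\sigma\tilde u(t_0+1,x;t_0,\phi_{L,R,\delta},A)\ge\sigma e^{\lambda^{*}/4}\phi_{L,R,\delta}(x)\ge\sigma\phi_{L,R,\delta}(x),
\end{equation*}
that is, $\sigma\phi_{L,R,\delta}$ is invariant under the unit-time evolution.

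From this point onward the construction of the entire solution is identical to that in Lemma~\ref{lem-001-3}. Setting $u_n(t,x):=u(t,x;-n,\sigma\phi_{L,R,\delta})$ gives a nondecreasing sequence in $n$ (by iterated application of the comparison principle), bounded above by a constant super-solution such as $\max\{\sigma_0,r^{*}/(\bar b-\chi\mu)\}$; its limit $u^{*}(t,x;L,R,A(\cdot,\cdot))$ is a positive bounded entire solution of \eqref{aux-new-new-new-eq01} bounded below by $\sigma\phi_{L,R,\delta}(x)$, which yields \eqref{aux-new-new-new-eq2} since $\phi_{L,R,\delta}$ is strictly positive on $(-L,L)$. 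Uniqueness is then carried over verbatim from Lemma~\ref{lem-001-3} via the part-metric argument: Hopf's lemma on $(-L,L)$ ensures that $\rho(u(t,\cdot),v(t,\cdot)):=\inf\{\ln\gamma:\gamma^{-1}u\le v\le\gamma u\text{ on }(-L,L)\}$ is well-defined and finite for any two positive entire solutions $u,v$ satisfying \eqref{aux-new-new-new-eq2}, and this quantity is strictly decreasing in $t$ whenever positive; extracting a limit profile along $t_n\to-\infty$ produces an entire solution on which $\rho$ is constant and positive, which is the desired contradiction. The main technical obstacle is the joint selection of $L^{*}$, $R^{*}$, $\delta$, and $\epsilon^{*}$ ensuring that the linearized principal eigenvalue remains uniformly bounded below by $\lambda^{*}/4$; once that bookkeeping is in hand, the rest of the argument is a routine transcription of Lemma~\ref{lem-001-3}.
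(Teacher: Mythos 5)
Your proposal is correct and takes essentially the same route as the paper: the paper's own proof simply chooses $L^*$ and $R^*$ so that $\lambda_L(r(\cdot)-2\varepsilon_R M)>0$ for $L\ge L^*$, $R\ge R^*$, and then invokes the arguments of Lemma \ref{lem-001-3} verbatim (Henry's perturbation theorem for the drift $A$, the small-$\sigma$ sub-solution, the monotone iteration from $t_0=-n$, and the part-metric uniqueness argument), which is exactly what you spell out, with your $\varepsilon_R M+\delta$ playing the role of the paper's $2\varepsilon_R M$.
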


\begin{proof}
First of all, there are $L^*>0$ and $R^*>0$ such that
$$
\lambda_L(r(\cdot)-2\varepsilon_R M)>0\quad \forall\,\, L\ge L^*,\,\, R\ge R^*.
$$

It then follows from  similar arguments as those in Lemma \ref{lem-001-3} that there is $\epsilon^*>0$ such that for any
$A(\cdot,\cdot)$ with $\|A\|_\infty\le \epsilon^*$,  \eqref{aux-new-new-new-eq01} has a {unique}
positive {bounded}
entire solution
$u^*(t,x;L,R,A(\cdot,\cdot))$ satisfying  \eqref{aux-new-new-new-eq2}.
\end{proof}

For every $u\in C^b_{\rm unif}(\R)$, let
\begin{equation}\label{psi-definition}
\Psi(x;u)=\mu\int_{0}^{\infty}\int_{\R}\frac{e^{-\nu s}e^{-\frac{|y-x|^2}{4s}}}{\sqrt{4\pi s}}u(y)dyds.
\end{equation}
It is well known that $\Psi(x;u)\in C^2_{\rm unif}(\R)$ and solves the elliptic equation
\begin{equation}\label{Psi-eq}
\frac{d^2}{dx^2}\Psi(x;u)-\nu\Psi(x;u)+\mu u=0.
\end{equation}

\begin{lem}\label{new-new-lm2}
For every $u\in C^b_{\rm unif}(\R)$, $u(x)\ge 0$, it holds that
\begin{equation}\label{estimate-on-space-derivative-1}
\left| \frac{d}{dx}\Psi(x;u)\right|\leq \sqrt{\nu}\Psi(x;u),\ \quad  \forall\ x\in\R.
\end{equation}
\end{lem}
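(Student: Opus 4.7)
The plan is to bypass the double-integral definition of $\Psi$ by passing to an explicit one-dimensional Green's function representation, where the bound becomes transparent.

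First, I would compute the $s$-integral explicitly. The classical Laplace-type identity
\begin{equation*}
\int_0^\infty \frac{1}{\sqrt{4\pi s}}\,e^{-\nu s-|z|^2/(4s)}\,ds=\frac{1}{2\sqrt{\nu}}\,e^{-\sqrt{\nu}\,|z|},\qquad z\in\R,
\end{equation*}
(which is standard and can be verified either by differentiating in $|z|$ or by recognizing $\frac{1}{2\sqrt{\nu}}e^{-\sqrt{\nu}|z|}$ as the Green's function of $-\partial_{xx}+\nu$ on $\R$) allows one to interchange the order of integration via Fubini (legitimate because $u$ is bounded nonnegative and the integrand is pointwise nonnegative) and obtain the representation
\begin{equation*}
\Psi(x;u)=\frac{\mu}{2\sqrt{\nu}}\int_{\R}e^{-\sqrt{\nu}\,|x-y|}\,u(y)\,dy.
\end{equation*}

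Next, I would differentiate this representation in $x$. Since $u\in C^b_{\rm unif}(\R)$ and the kernel $e^{-\sqrt{\nu}|x-y|}$ together with its $x$-derivative (existing almost everywhere with modulus $\sqrt{\nu}\,e^{-\sqrt{\nu}|x-y|}$) are uniformly integrable in $y$, dominated convergence allows differentiating under the integral, giving
\begin{equation*}
\frac{d}{dx}\Psi(x;u)=-\frac{\mu}{2}\int_{\R}\mathrm{sgn}(x-y)\,e^{-\sqrt{\nu}\,|x-y|}\,u(y)\,dy.
\end{equation*}
Taking absolute values, using $|\mathrm{sgn}(x-y)|\le 1$ and $u(y)\ge 0$, I would conclude
\begin{equation*}
\left|\frac{d}{dx}\Psi(x;u)\right|\le \frac{\mu}{2}\int_{\R}e^{-\sqrt{\nu}\,|x-y|}\,u(y)\,dy=\sqrt{\nu}\,\Psi(x;u),
\end{equation*}
which is exactly \eqref{estimate-on-space-derivative-1}.

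There is essentially no hard step; the only routine matter is to justify Fubini and differentiation under the integral, both of which are immediate from $u\in C^b_{\rm unif}(\R)$ and the integrability of $e^{-\sqrt{\nu}|z|}$ over $\R$. As an alternative in case one prefers to stay within the PDE framework, one could instead set $F_\pm(x)=\sqrt{\nu}\,\Psi(x;u)\mp\Psi_x(x;u)$ and use \eqref{Psi-eq} to verify the first-order relations $F_+'+\sqrt{\nu}\,F_+=\mu u$ and $F_-'-\sqrt{\nu}\,F_-=-\mu u$; integrating and exploiting the boundedness of $\Psi$ and $\Psi_x$ at $\mp\infty$ then forces $F_\pm\ge 0$. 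I prefer the Green's-function route since it is quicker and makes the constant $\sqrt{\nu}$ visibly sharp.
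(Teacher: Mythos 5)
Your proof is correct. Note that the paper itself does not give an argument here: it simply cites \cite[Lemma 2.2]{SaShXu}, so your write-up is a genuinely self-contained alternative rather than a reproduction. The key computation checks out: the Laplace-type identity $\int_0^\infty (4\pi s)^{-1/2}e^{-\nu s-|z|^2/(4s)}\,ds=\tfrac{1}{2\sqrt\nu}e^{-\sqrt\nu|z|}$ is the standard subordination formula, Fubini is justified by nonnegativity, and the resulting convolution representation $\Psi(x;u)=\tfrac{\mu}{2\sqrt\nu}\int_\R e^{-\sqrt\nu|x-y|}u(y)\,dy$ is exactly the Green's function inversion of \eqref{Psi-eq}. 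Differentiating under the integral (dominated by $\sqrt\nu\,e^{\sqrt\nu}e^{-\sqrt\nu|x-y|}\|u\|_\infty$ for nearby difference quotients) gives the kernel $-\tfrac{\mu}{2}\,\mathrm{sgn}(x-y)e^{-\sqrt\nu|x-y|}$, whose modulus is pointwise $\sqrt\nu$ times the kernel of $\Psi$; since $u\ge 0$ the bound $|\Psi_x|\le\sqrt\nu\,\Psi$ follows with the sharp constant. Your sketched ODE alternative is also sound: with $F_\pm=\sqrt\nu\,\Psi\mp\Psi_x$ one gets $(e^{\sqrt\nu x}F_+)'=\mu u\,e^{\sqrt\nu x}\ge 0$ and $(e^{-\sqrt\nu x}F_-)'=-\mu u\,e^{-\sqrt\nu x}\le 0$, and boundedness of $\Psi$ and $\Psi_x$ forces $F_\pm\ge 0$. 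Either route is acceptable; the Green's function one has the advantage of making the constant and the equality case ($u$ supported on one side of $x$, in the limit) transparent.
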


\begin{proof}
The lemma is proved in \cite[Lemma 2.2]{SaShXu}.
\end{proof}

Let $\phi_L(x)$ be the positive principal eigenfunction of \eqref{ev-eq0} corresponding to the principal eigenvalue $\lambda_L(r(\cdot))$
with $\phi_L(0)=1$.  By a priori estimates and Harnack's inequality for elliptic equations, there exist $L_n\to\infty$ and  $\phi_{\infty}(x)>0$
 such that
 $$
 \lim_{n\to\infty} \phi_{L_n}(x)=\phi_\infty(x)
 $$
 locally uniformly, and
\begin{equation}\label{phi_1}
(\phi_{\infty})_{xx}+c(\phi_{\infty})_{x}+ r(x) \phi_{\infty}=\lambda_{\infty} \phi_{\infty},\quad x\in\R.
\end{equation}

{
\begin{lem}
\label{new-new-lm3}
\begin{equation}\label{phi_derivi_est}
|\frac{d}{dx}\phi_{\infty}(x)|\leq \frac{\sqrt{8r^*+c^2}+|c|}{2}\phi_{\infty}(x)  \quad \forall\, x\in\R.
\end{equation}
\end{lem}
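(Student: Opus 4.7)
The natural object to study is the logarithmic derivative $\psi(x):=\phi_\infty'(x)/\phi_\infty(x)$, which is well-defined and smooth on all of $\R$ because $\phi_\infty>0$. Dividing \eqref{phi_1} by $\phi_\infty$ and observing that $\phi_\infty''/\phi_\infty=\psi'+\psi^2$ yields the Riccati equation
\begin{equation*}
\psi'(x)+\psi(x)^2+c\,\psi(x)+r(x)-\lambda_\infty=0.
\end{equation*}
Since $\phi_\infty$ is smooth and strictly positive on all of $\R$, $\psi$ is finite everywhere. Proving \eqref{phi_derivi_est} amounts to establishing $|\psi(x)|\le M$ for all $x\in\R$, with $M:=\tfrac{1}{2}(|c|+\sqrt{c^2+8r^*})$, after which \eqref{phi_derivi_est} follows immediately from $\phi_\infty'=\psi\,\phi_\infty$.

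The threshold $M$ is designed so that it is the positive root of the quadratic $y^2-|c|y-2r^*=0$; equivalently, $M^2-|c|M=2r^*$. The plan is a contradiction argument: suppose $\psi(x_0)>M$ at some $x_0\in\R$ (the case $\psi(x_0)<-M$ is symmetric). Combining the bound $\lambda_\infty\le r^*$ from \eqref{new-new-eq0} with the identity $M^2-|c|M=2r^*$, the Riccati equation should yield a differential inequality of the form $\psi'\le -\alpha\psi^2$ for some $\alpha>0$ whenever $\psi\ge M$. Propagating this inequality toward smaller values of $x$, a comparison with the ODE $y'=-\alpha y^2$ forces $\psi(x)\to +\infty$ at some finite $x_*<x_0$, which would require $\phi_\infty(x_*)=0$ and contradicts the positivity of $\phi_\infty$ on $\R$. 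Hence $\psi\le M$ everywhere on $\R$, and the opposite bound $\psi\ge -M$ is obtained by the symmetric argument applied to the case $\psi(x_0)<-M$.

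The central obstacle is to obtain the sharp coefficient $M$ depending only on $c$ and $r^*$, starting from the a priori bound $\lambda_\infty\le r^*$ and without introducing any dependence on $r_*=\inf_{x\in\R}r(x)$. A direct bounding of $\lambda_\infty-r(x)$ in the Riccati equation by $r^*-r_*$ would only produce a weaker constant involving $r_*$; obtaining the precise factor $2r^*$ in the defining quadratic will require carefully exploiting the global positivity of $\phi_\infty$ together with the characterization of $\phi_\infty$ as the locally uniform limit of the Dirichlet principal eigenfunctions $\phi_{L_n}$ normalized by $\phi_{L_n}(0)=1$. Passing through the approximating eigenfunctions, where the sign structure of $\phi_{L_n}'/\phi_{L_n}$ can be analyzed more rigidly on interior subintervals, should allow the sharp $M$ to survive in the limit.
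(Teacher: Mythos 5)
Your Riccati route is genuinely different from the paper's argument, and its skeleton is sound, but as written it is not a proof: the one step that carries all of the quantitative content is exactly the step you leave open. To run the backward blow-up comparison you need the pointwise differential inequality $\psi'\le -\psi^2+|c|\psi+2r^*=-(\psi-M)(\psi+M')$ for $\psi>0$, where $M'=\frac{\sqrt{c^2+8r^*}-|c|}{2}$, and this is equivalent to the pointwise bound $\lambda_\infty-r(x)\le 2r^*$ for all $x$. You correctly observe that the crude estimate $\lambda_\infty-r(x)\le r^*-r_*$ only yields the weaker constant $\frac{|c|+\sqrt{c^2+4(r^*-r_*)}}{2}$, but your proposed remedy --- ``carefully exploiting the global positivity of $\phi_\infty$'' and ``passing through the approximating eigenfunctions $\phi_{L_n}$'' --- is not an argument; the $\phi_{L_n}$ satisfy the same Riccati equation with $\lambda_{L_n}$ in place of $\lambda_\infty$ and with $\psi$ blowing up at $\pm L_n$, and nothing in that picture manufactures the factor $2r^*$. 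So the central obstacle you yourself name is left unresolved, and the proposal does not establish \eqref{phi_derivi_est}.

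For comparison, the paper proceeds quite differently: it rewrites \eqref{phi_1} as $(\phi_\infty)_{xx}+c(\phi_\infty)_x-2r^*\phi_\infty+a(x)=0$ with $a(x)=\big(2r^*+r(x)-\lambda_\infty\big)\phi_\infty(x)$, asserts $a\ge 0$, and inverts the constant-coefficient operator to obtain $\phi_\infty(x)=\frac{1}{\sqrt{8r^*+c^2}}\big(e^{-\lambda_1^c x}\int_{-\infty}^x e^{\lambda_1^c y}a(y)\,dy+e^{\lambda_2^c x}\int_x^\infty e^{-\lambda_2^c y}a(y)\,dy\big)$ with $\lambda_{1,2}^c=\frac{\sqrt{8r^*+c^2}\pm c}{2}$; differentiating and using $a\ge 0$ gives $|\phi_\infty'|\le\max\{\lambda_1^c,\lambda_2^c\}\,\phi_\infty$, which is \eqref{phi_derivi_est}. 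Note that the paper's claim $a(x)\ge 0$ is \emph{exactly} the inequality $\lambda_\infty-r(x)\le 2r^*$ that you are missing (it follows from $\lambda_\infty\le r^*$ in \eqref{new-new-eq0} once one grants $r(x)\ge -r^*$). If you import that same bound, your Riccati argument closes, modulo one small repair: the claimed inequality $\psi'\le-\alpha\psi^2$ ``whenever $\psi\ge M$'' is false at $\psi=M$, where the right-hand side of the Riccati inequality vanishes; you should first use $\psi'\le-(\psi(x_0)-M)(\psi(x_0)+M')<0$ to drive $\psi$ up linearly as $x$ decreases, and only then invoke the quadratic comparison to produce blow-up at finite distance.
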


\begin{proof}
It follows from  \cite[Lemma 2.1, Lemma 2.2]{SaSh4}. To be more precise, first
 rewrite $\eqref{phi_1}$ as
\begin{equation}\label{phi_2}
(\phi_{\infty})_{xx}+c(\phi_{\infty})_{x}-2r^*\phi_{\infty}+\big(2r^*+r(x)-\lambda_{\infty}\big)\phi_{\infty}= 0,\quad x\in\R.
\end{equation}
Put $a(x):=\big(2r^*+r(x)-\lambda_{\infty}\big)\phi_{\infty}(x)$. Note that $a(x)\ge 0$ for any $x\in\R$.

Next, it can be verified directly that
$$
\phi_\infty (x)=\int_0^\infty  \int_{\R}\frac{e^{-2r^* s}}{\sqrt{4\pi s}}e^{-\frac{|x+cs-y|^2}{4s}}a(y)dy ds.
$$
By \cite[Lemma 2.1]{SaSh4},
$$
\phi_\infty(x)=\frac{1}{\sqrt{8r^*+c^2}}\Big( e^{-\lambda_1^cx} \int_{-\infty}^x e^{\lambda_1^cy}a(y)dy+e^{\lambda_2^c x}\int_x^\infty e^{-\lambda_2^c y}a(y)dy\Big)
$$
and
\begin{equation}
\label{revised-eq1}
\frac{d}{dx}\phi_\infty(x)=\frac{1}{\sqrt{8r^*+c^2}}\Big(-\lambda_1^c  e^{-\lambda_1^c x} \int_{-\infty}^x e^{\lambda_1^cy}a(y)dy+\lambda_2^c e^{\lambda_2^c x}\int_x^\infty e^{-\lambda_2^c y}a(y)dy\Big),
\end{equation}
where
$$
\lambda_1^c =\frac{\sqrt{8r^*+c^2}+c}{2},\quad \lambda_2^c=\frac{\sqrt{ 8r^*+c^2}-c}{2}.
$$

Now, by $a(x)\ge 0$ and \eqref{revised-eq1}, we have
\begin{equation*}
|\frac{d}{dx}\phi_{\infty}(x)|\leq \frac{\sqrt{8r^*+c^2}+|c|}{2}\phi_{\infty}(x)  \quad \forall\, x\in\R.
\end{equation*}
The lemma is thus proved.
\end{proof}}

We now prove Theorem \ref{persistence-extinction-thm1}.

\begin{proof} [Proof of Theorem \ref{persistence-extinction-thm1}]

(1)
 If $c>c^*$, using the same arguments as those in the proof of Theorem \ref{compact-support} (1), we can prove
$\lim_{t\to\infty}u(t,x;u_0)=0$ uniformly for $x\in\R$.

If $c<-c^*$, let $\kappa=\sqrt{r^*}$,
 for any $\bar c$ satisfies
$$
c<\bar c<-c^*=-2\sqrt {r^*}=-c_{\kappa}.
$$
By Lemma \ref{lem-001-1}(ii),
$$
\lim_{t\to\infty}\sup_{x\le \bar ct }  u(t,x;u_0)=0.
$$
We claim that $\lim_{t\to\infty}\sup_{x\in\R}  u(t,x;u_0)=0$. For otherwise, there are $\delta_0>0$, $t_n\to\infty$ and $x_n\in (\bar c t_n, \infty)$
such that
$$
 u(t_n,x_n;u_0)\ge \delta_0\quad \forall\, n\ge 1.
$$
Let $u_n(t,x)=u(t+t_n,x+x_n;u_0)$ and $v_n(t,x)= v(t+t_n,x+x_n;u_0)$. Note  that
  $x_n-c t_n\to  \infty$ as $n\to\infty$.
  Without loss of generality, we may assume that there is $(u^*(t,x),v^*(t,x))$ such that
$$
\lim_{n\to\infty}(u_n(t,x),v_n(t,x))=(u^*(t,x),v^*(t,x))
$$
locally uniformly in $(t,x)\in\R\times \R$, and $(u^*(t,x),v^*(t,x))$ satisfies
\begin{equation}\label{new-Keller-Segel-eq2}
\begin{cases}
u_t^*=\Delta u^*-\nabla \cdot (\chi u^* \nabla v^*)+u^*(r(\infty)-bu^*),\quad t\in\R,\,\, x\in\R\cr
0 =\Delta v^*-  \nu v^* +\mu u^*,\quad t\in\R,\,\, x\in\R.
\end{cases}
\end{equation}
By $r(\infty)<0$, it can be proved that $u^*(t,x)\equiv 0$, which contradicts to
$$u^*(0,0)=\lim_{n\to\infty}  u(t_n,x_n;u_0)\ge \delta_0.$$

Therefore, $\lim_{t\to\infty} \sup_{x\in\R}  u(t,x;u_0)=0$.
\end{proof}

\begin{proof} [Proof of Theorem \ref{persistence-extinction-thm1}]
(2) First, fix $u_0$ with {nonempty} compact support.
 Let $M$ be such that $M\geq \max\{\frac{r^*}{b-\chi\mu}, \sup_{x\in\R}u_0(x)\}$.
 Note  that
 $$
 u(t,x;u_0)\le M\quad \forall\,\, t\ge 0,\,\, x\in\R.
 $$

Next, let $\phi_\infty (x)$ be as in Lemma \ref{new-new-lm3}.
Without loss of generality, we may suppose that $u_0(x)\leq \phi_{\infty}(x)$ { for any $x\in\R$}.

Let $u_{\infty}(t,x)=e^{\lambda_{\infty}t}\phi_{\infty}(x)$. Then $u_{\infty}(t,x)$ satisfies the following parabolic equation
\begin{align}\label{u_infty-equ}
(u_{\infty})_{t}&=(u_{\infty})_{xx}+c(u_{\infty})_{x}+ r(x) u_{\infty}\cr
&\geq (u_{\infty})_{xx}+c(u_{\infty})_{x}+ (r(x)-(b-\chi\mu)u_{\infty} )u_{\infty},\, \,\forall\,  t>0,  x\in\R.
\end{align}
Hence, if $\nu \geq \nu^*:={\frac{(\sqrt{8r^*+c^2}+|c|)^2}{4}}$, then
by Lemma \ref{new-new-lm2} and \eqref{phi_derivi_est}, we get
\begin{align}
-\chi (u_{\infty})_{x} \Psi_x(x;u(t,\cdot;u_0))-\chi\nu\Psi(x;u(t,\cdot;u_0)) u_{\infty}&\leq \chi |(u_{\infty})_{x}| |\Psi_x|-\chi\nu\Psi u_{\infty}\cr
&\leq \chi\sqrt{\nu}\Psi (|(u_{\infty})_{x}|-\sqrt{\nu}u_{\infty})\cr
&=\chi\sqrt{\nu}\Psi e^{\lambda_{\infty}t}\big(|(\phi_{\infty})_{x}|-\sqrt{\nu}\phi_{\infty}\big)\cr
&\leq \chi\sqrt{\nu}\Psi e^{\lambda_{\infty}t}\big(\sqrt{\nu^*}- \sqrt{\nu}\big)\phi_{\infty}\cr
&\leq 0, { \,\,\, \forall\, x\in\R.}
\end{align}
By \eqref{u_infty-equ}, we have
\begin{align*}
(u_{\infty})_{t}\geq &(u_{\infty})_{xx}+c(u_{\infty})_{x}-\chi (u_{\infty})_{x} \Psi_x(x+ct ;u(t,\cdot;u_0))\\
&+ (r(x)-\chi\nu\Psi(x+ct;u(t,\cdot;u_0)) -(b-\chi\mu)u_{\infty} )u_{\infty},\, \forall x\in\R.
\end{align*}
{Let $\tilde u(t,x;u_0)=u(t,x+ct;u_0)$, then $\tilde u(t,x;u_0)$ satisfies
$$
\tilde u_{t}= \tilde u_{xx}+c \tilde u_{x}-\chi \tilde u_{x} \Psi_x(x+ct ;u(t,\cdot;u_0))+ (r(x)-\chi\nu\Psi(x+ct;u(t,\cdot;u_0)) -(b-\chi\mu)\tilde u )\tilde u,\, \forall \,t>0, x\in\R.
$$}
By the comparison principle for parabolic equations, we have
$$
\tilde u(t,x;u_0) \leq u_{\infty}(t,x)=e^{\lambda_{\infty}t}\phi_{\infty}(x), { \quad \forall\,\, t\ge 0,\,\, x\in\R.}
$$

Since $\phi_{\infty}(x)$ is bounded { on any compact set} and $\lambda_{\infty}<0$, we then have $\lim_{t\to\infty} \tilde u(t,x;u_0)=0$ locally  uniformly in $x\in\R$.

We prove now that $\lim_{t\to\infty} \tilde u(t,x;u_0)=0$   uniformly in $x\in\R$. Assume by contradiction that this is not true. Then
there is $\epsilon_0>0$, $t_n\to\infty$, and $|x_n|\to\infty$ such that
$$
\tilde u(t_n,x_n;u_0)\ge \epsilon_0.
$$
Without loss of generality, we assume that $x_n\to\infty$, and  $\lim_{n\to\infty}\tilde u(t+t_n,x+x_n;u_0)=U^*(t,x)$,
$ \lim_{n\to\infty}  \Psi(x+x_n+c{(t+t_n)}; { u}(t+t_n,\cdot;u_0))=\Psi^*(t,x)$ locally uniformly. Then
$$
U^*_t=U^*_{xx}+{ cU^*_{x}}-\chi \Psi^*_x U^*_x+U^*(r(\infty)-\chi\nu \Psi^*-(b-\chi\mu)U^*),\quad t\in\R,\,\, x\in\R.
$$
Note that $U^*(t,x)$ is bounded and nonnegative {and $r(\infty)<0$}. We must have
$$
U^*(t,x)\equiv 0,
$$
which contradicts to $U^*(0,0)\ge \epsilon_0$. Therefore,
$\lim_{t\to\infty} \tilde u(t,x;u_0)=0$   uniformly in $x\in\R$,
{ which implies that $\lim_{t\to\infty} u(t,x;u_0)=0$
uniformly in $x\in\R$.}
\end{proof}

\begin{proof} [Proof of Theorem \ref{persistence-extinction-thm1}]
(3)
Suppose $|c|<c^*$. We first prove
$$
\lim_{t\to\infty} \sup_{|x-ct|\ge c^{'}t}u(t,x;u_0)=0\quad \forall\,\, c^{'}>0.
$$
Assume that the result does not hold. Then there are {constants $\delta_0>0$, and a sequence
$\{(t_n, x_n)\}_{n\in\N}$,  $t_n\to\infty$, $|x_n-ct_n|\geq c^{'} t_n$
such that }
$$
u(t_n,x_n;u_0)\ge \delta_0\quad \forall\, n\ge 1.
$$
Let $u_n(t,x)=u(t+t_n,x+x_n;u_0)$ and $v_n(t,x)=v(t+t_n,x+x_n;u_0)$.    Note  that
  $|x_n-c t_n|\to  \infty$ as $n\to\infty$. Thus, either  $x_n-c t_n\to  \infty$ as $n\to\infty$ or $x_n-c t_n\to  -\infty$ as $n\to\infty$.

In the case $x_n-c t_n\to  \infty$ as $n\to\infty$.
{Following similar arguments as those in the proof of Theorem \ref{persistence-extinction-thm1} (1), we can get a contradiction.}

In the case $x_n-c t_n\to  -\infty$ as $n\to\infty$. Also using similar arguments as those in the proof of Theorem \ref{persistence-extinction-thm1} (1) and the fact $r(-\infty)<0$, we also can get a contradiction.

\smallskip

Next, we prove that,  if $\lambda_\infty(r(\cdot))>0$, then
$$
\liminf_{t\to\infty}\inf_{|x-ct|\le L} u(t,x;u_0)>0\quad \forall\,\, L>0.
$$
To this end, let $u(t,x;u_0)=\tilde u(t,x-ct;u_0), v(t,x;u_0)=\tilde v(t,x-ct;u_0) $ in \eqref{Keller-Segel-eq0}  and
set $M=\max\{\|u_0\|_{\infty},\frac{r^*}{b-\chi\mu}\}$.
By \eqref{E2-1}, it follows that, for any  $R\gg 1, { p>1} $, $(\tilde u(t,x;u_0),\tilde v(t,x;u_0))$ satisfies
\begin{equation}\label{Tm3AA-eq1}
\tilde u_t\geq \tilde u_{xx}+ c \tilde u_x-\chi \tilde v_x\tilde u_x+\tilde u(r(x)-\varepsilon_R M- C_{R,p} \tilde u^{\frac{1}{p}}-(b-\chi\mu)\tilde u), \quad t\geq 1,\ x\in\R.
\end{equation}
Let $p=2$ and $\epsilon^*$ be as in
Lemma \ref{new-new-lm1}.
{By the similar arguments as those in the proof of \eqref{Tm1.1.2liminf}, it can be proved that}
\begin{equation}\label{AA-eq3-1}
u^*(t,x; L,R, A(\cdot,\cdot))\leq \tilde u(t,x;u_0)\quad \forall\, t\ge 1,\,\, -L\le x\le L.
\end{equation}
By Lemma \ref{new-new-lm1}, there are $L^*>0$, and $R^*>0$ such that for any $L\ge L^*$,
$R\ge R^*$,  and $\|A(\cdot,\cdot)\|_\infty<\epsilon^*$,
$$
\inf_{t\in\R,|x|\le L-\delta} u^*(t,x;L,R,A(\cdot,\cdot))>0\quad \forall\,\, 0<\delta<L.
$$
This together with \eqref{AA-eq3-1} implies that
$$
\liminf_{t\to\infty}\inf_{|x|\le L-\delta}\tilde  u(t,x;u_0)>0,  \quad \forall \,\, 0<\delta <L.
$$
Since  $u(t,x;u_0)=\tilde u(t,x-ct;u_0)$, we then have
$$
\liminf_{t\to\infty}\inf_{|x-ct|\le L-\delta}  u(t,x;u_0)>0,  \quad \forall \,\, 0<\delta <L,
$$
which implies that
$$
\liminf_{t\to\infty}\inf_{|x-ct|\le L} u(t,x;u_0)>0\quad \forall\,\, L>0.
$$
\end{proof}


\end{document}